\def\x{{\mathbf x}}
\def\A{{\mathcal A}}
\def\C{{\mathcal C}}
\def\B{{\mathcal B}}
\def\D{{\mathcal D}}
\def\J{{\mathcal J}}
\def\X{{\mathcal X}}
\def\Z{{\mathcal Z}}
\def\U{{\mathcal U}}
\def\V{{\mathcal V}}
\def\S{{\mathcal S}}
\def\Y{{\mathcal Y}}
\def\W{{\mathcal W}}
\newcommand{\LSP}{\,\mathrm{LSP}}
\newcommand{\norm}[1]{\left\lVert#1\right\rVert}
\newcommand{\R}{\mathbb{R}}
\DeclareMathOperator*{\sign}{sign}
\DeclareMathOperator{\prox}{prox}
\DeclareMathOperator*{\argmin}{argmin}
 \newtheorem{theorem}{Theorem}[section]
 \newtheorem{lemma}[theorem]{Lemma}
 \newtheorem{definition}[theorem]{Definition}
 \newtheorem{proposition}[theorem]{Proposition}
 \newtheorem{corollary}[theorem]{Corollary}
 \newtheorem{remark}[theorem]{Remark}
\begin{document}

\title{Log-Sum Regularized Kaczmarz Algorithms for High-Order Tensor Recovery}
\author{Katherine Henneberger\thanks{Department of Mathematics, University of Kentucky, Lexington, KY 40511}
\and Jing Qin\thanks{Department of Mathematics, University of Kentucky, Lexington, KY 40511}}
\date{}

\maketitle

\abstract{Sparse and low rank tensor recovery has emerged as a significant area of research with applications in many fields such as computer vision. However, minimizing the $\ell_0$-norm of a vector or the rank of a matrix is NP-hard. Instead, their convex relaxed versions are typically adopted in practice due to the computational efficiency, e.g., log-sum penalty. In this work, we propose novel log-sum regularized Kaczmarz algorithms for recovering high-order tensors with either sparse or low-rank structures. We present block variants along with convergence analysis of the proposed algorithms. Numerical experiments on synthetic and real-world data sets demonstrate the effectiveness of the proposed methods.}

\section{Introduction}
\label{sec:1}

The Kaczmarz algorithm \cite{karczmarz1937angenaherte} is one of the prominent algorithms for solving large linear systems, especially when the desired solution is sparse. It aims to solve a consistent linear system in an iterative manner by involving only one equation at each iteration. The Kaczmarz algorithm was later rediscovered as Algebraic Reconstruction Technique (ART)  in computational tomography \cite{gordon1970algebraic}. The efficiency of the Kaczmarz algorithm makes it a popular tool in many applications such as medical imaging \cite{zhou2013tensor} and signal recovery \cite{chen2021regularized}.

There exist numerous extensions of the classical Kaczmarz algorithm including randomized, regularized, and block interpretations. The randomized Kaczmarz algorithm (RKA) \cite{strohmer2009randomized}
employs a random selection of projections at each update, which can be considered as a special case of coordinate descent or stochastic gradient descent. If the rows of matrix $A$ are chosen randomly with uniform i.i.d. selection, then the RKA is an instance of stochastic gradient descent \cite{needell2014stochastic}. Additionally, when solving the least-squares problem, the RKA can be obtained by applying coordinate descent to the dual formulation of the objective function \cite{wright2015coordinate}. Overall, the RKA outperforms the classical Kaczmarz algorithm for overdetermined systems. In order to further improve convergence, regularized Kaczmarz algorithm introduces a regularization term into the objective function \cite{chen2021regularized}. Block Kaczmarz algorithms \cite{needell2014paved,needell2015randomized} are developed to seek acceleration in solving larger linear systems by performing each iteration on a block of indices selected from an independent partition of the row indices. More recently, optimal learning rate is discussed for the matrix RKA \cite{marshall2023optimal}.

By considering general multi-linear systems, the Kaczmarz algorithm and its variants have been extended to the tensor setting recently. In \cite{wang2023solving}, the classical matrix Kaczmarz algorithm is extended by considering a tensor-vector multiplication problem. The structure of tensor-vector multiplication allows for the consideration of high-order tensors, while Kaczmarz-type algorithms for tensor systems are more commonly applied to problems involving third-order tensor-tensor products. For example, \cite{ma2022randomized} introduces a randomized Kaczmarz method for solving linear systems of third-order tensors under the t-product. Notably, the tensor randomized Kaczmarz method is empirically shown to converge faster than the traditional Kaczmarz method applied to a matricization of the data. Furthermore, \cite{chen2021regularized} adapts the Kaczmarz algorithm into the sparse/low-rank third-order tensor recovery problem by proposing a regularized Kaczmarz algorithm. This work brings a regularization term into the objective function to preserve desirable characteristics of the underlying solution, such as sparsity and low-rankness. In our work, we will consider a general sparse or low-rank tensor recovery problem with tensor order at least three and propose log-sum based tensor regularizations.

Due to the good approximation to the $\ell_0$ pseudo-norm, the log-sum function has been widely applied as a regularizer in sparse and low-rank optimization problems. Recent literature has explored the utility of this function in compressed sensing \cite{prater2022proximity, zhou2023iterative,chen2021logarithmic}. In \cite{zhou2023iterative} an iterative thresholding algorithm is proposed for solving the sparse matrix recovery problem with a matrix log-sum regularization. While nuclear norm minimization is a popular model for matrix  completion, a log-sum penalty term composed with the singular values of the matrix has shown superior performance as a nonconvex rank surrogate in low-rank minimization \cite{chen2021logarithmic}.

To the best of our knowledge, most of the Kaczmarz-type tensor algorithms in literature focus on the third-order tensor case. The extension to tensors with order greater than three is not trivial as the classical t-product and the associated algebraic structures are only defined for third-order tensors. High-order tensors commonly arise in real-world applications such as in fourth-order color videos and hyperspectral data. Furthermore, there are a variety of tensor recovery applications, such as image inpainting, sparse signal recovery, and image destriping. Working directly with tensors and avoiding matricization is advantageous as it preserves the multidimensional structure and correlation of the data. Another benefit is the enhanced computational efficiency in processing the high-order tensorial data in parallel with utilized data structures. Motivated by the utility of log-sum regularization and the aforementioned Kaczmarz extensions, we propose a log-sum regularized Kaczmarz algorithmic framework for high-order tensor recovery with corresponding convergence analysis for several variants. We conduct multiple numerical experiments to demonstrate convergence and effectiveness of the proposed algorithms.

The major contributions of this paper are shown as follows:
\begin{enumerate}
    \item We propose two novel log-sum regularized Kaczmarz algorithms for solving the general high-order tensor recovery problem where the recovered tensor is either sparse or low-rank, and analyze the convergence of the proposed algorithms.
    \item We present two extensions to the sparse and the low-rank cases with corresponding convergence guarantees for block variants.
    \item Numerical experiments on synthetic and real world data justify the performance of the proposed methods. Furthermore, applications to image destriping demonstrate the efficiency of the algorithms in practice.
\end{enumerate}
The rest of the paper is organized as follows: Section~\ref{sec:prelim} introduces basic concepts and results for tensors and convex optimization. Our algorithms are proposed in Section~\ref{sec:Prop} along with the relevant convergence analysis. Block extensions of our algorithms are presented in Section~\ref{sec:Ext}, and various numerical experiments are described in Section~\ref{sec:num} to demonstrate the proposed effectiveness. Finally, the conclusion and future works are summarized in Section~\ref{sec:Con}.

\section{Preliminaries}
\label{sec:prelim}
Throughout the paper, we use bold lowercase letters such as $\x$ to denote vectors, capital letters such as $X$ to denote matrices, and calligraphic letters such as $\X$ to denote tensors. We denote the zero tensor as $\mathcal{O}$ and we use the bracket notation $[n]$ to denote the set of the first $n$ natural numbers, i.e., $\{1,2,...,n\}$. We use $\mathbb{E}$ to denote the expectation and we assume that the tensor $\X\in\R^{n_1\times n_2\times ...\times n_m}$ unless otherwise specified. To make this paper self-contained, we include necessary definitions for tensors and relevant operations from \cite{qin2022low} in Definition~\ref{def:tensors}.

\begin{definition}\cite{qin2022low}\label{def:tensors}

Consider the $m$-th order tensors $\A\in\R^{n_1\times n_2 \times n_3\times ...\times n_m}$ and $\B\in\R^{n_2\times k\times n_3\times ...\times n_m}$.
\begin{itemize}
      \item The \textbf{block diagonal matrix} of the tensor $\A$ is formed using the matrix faces $A^j=\A(:,:,i_3,...,i_m)$ where $j = (i_m-1)n_3\cdots n_{m-1}+\cdots+(i_4-1)n_3+i_3$, such that $\text{bdiag}(\A)=\text{diag}(A^1,A^2,...,A^J)\in\R^{n_1n_3\cdots n_m\times n_2n_3\cdots n_m}$ where $J=n_3n_4\cdots n_m$.
        \item The \textbf{facewise product} of $\A$ and $\B$, denoted by $\C=\A\Delta\B$, is defined through
         \begin{align*}
             \text{bdiag}(\C)=\text{bdiag}(\A)\cdot\text{bdiag}(\B),
         \end{align*}
         where $\C$ is of size ${n_1\times k\times n_3\times...\times n_m}$, and $(\cdot)$ is simply matrix multiplication.

        \item  { The \textbf{mode-$i$ product} of tensor $\A$ with matrix $U$, denoted as $\B = \A\times_i U$ is defined as $\B_{(i)} = U\cdot \A_{(i)}$ where $\B_{(i)}$ and $\A_{(i)}$ denote the mode-$i$ unfoldings of tensors $\B$ and $\A$ respectively and $(\cdot)$ denotes matrix multiplication.}
        \item A general \textbf{linear transform}
        $L:\mathbb{C}^{n_1\times \ldots\times n_m}\rightarrow \mathbb{C}^{n_1\times \ldots\times n_m}$ is defined as $$ L(\A) = \A\times_3 U_{n_3}\times ...\times_m U_{n_m}:=\A_L,$$ where $U_{n_i}\in{\mathbb{C}^{n_i\times n_i}}$ are the corresponding transform matrices. When matrices $\{U_{n_i}\}_{i=3}^m$ are invertible, the linear transform $L$ is invertible. Moreover, we assume there exists a constant $\rho>0$ such that the corresponding matrices $\{U_{n_i}\}_{i=3}^m$ of the invertible linear transforms $L$ satisfy the following condition:
        \[
        \begin{aligned}
            &(U_{n_m}\otimes U_{n_{m-1}}\otimes ...\otimes U_{n_3})\cdot(U^*_{n_m}\otimes U^*_{n_{m-1}}\otimes ...\otimes U^*_{n_3})\\ &=(U^*_{n_m}\otimes U^*_{n_{m-1}}\otimes ...\otimes U^*_{n_3}) \cdot(U_{n_m}\otimes U_{n_{m-1}}\otimes ...\otimes U_{n_3})\\
        &=\rho I,
        \end{aligned}\]
        where $U^*_{n_i}$ is the complex conjugate transpose of $U_{n_i}$ and $\otimes$ denotes the Kronecker product.
        Here the identity matrix is of size $n_3 n_4....n_m \times n_3 n_4....n_m $.
        For example, if $U_{n_i}$ for $i=3,\ldots,m$ are the matrix of the Fourier transform, corresponding to the command \verb"fft" in Matlab, then $\rho = n_3\cdot n_4\cdot....n_m$. The corresponding $L$ for this case is called Fourier transform. In general, if $U_{n_i}$'s are unitary, then $\rho=1$.
        \item The \textbf{L-based t-product} is defined as $*_L: \R^{n_1\times n_2\times ....\times n_m}\times \R^{n_2\times k\times ....\times n_m}\rightarrow \R^{n_1\times k\times ....\times n_m}$ such that
        \begin{equation}
            \A*_L\B = L^{-1}(\A_L\Delta \B_L)=\C,
            \label{ltprod}
        \end{equation}
        where $\Delta$ is the facewise product. When $L$ is the Fourier transform, this reduces to the standard t-product. This yields two important properties:
\begin{align}
    \langle \A,\B\rangle  &= \frac{1}{\rho}\langle \text{bdiag}(\A_L),\text{bdiag}(\B_L)\rangle \label{property1},\\
      \|\A\|^2_F&=\frac{1}{\rho}\|\text{bdiag}(\A_L)\|_F^2.\label{froprop}
\end{align}
\item The \textbf{conjugate transpose} of $\A\in \mathbb{C}^{n_1\times n_2\times \ldots\times n_m}$ is the tensor $\A^*\in \mathbb{C}^{n_2\times n_1\times \ldots\times n_m}$ such that
\begin{equation}
(\A^*)_L(:,:,i_3,...,i_m) = (\A_L(:,:,i_3,...,i_m))^*\label{property2}
\end{equation}
for all $i_j\in[n_j]$, $j\in{3,...,m}$.

\item Tensor $\J\in \R^{n\times n\times n_3 ...\times n_m}$ is the \textbf{identity tensor} if $\J_L(:,:,i_3,...,i_m)=I_n$ for all $i_j\in [n_j], j\in\{3,...,m\}$.

\item
Tensor $\mathcal{X}$ is \textbf{orthogonal }if $\mathcal{X}^**_L\mathcal{X}=\mathcal{X}*_L\mathcal{X}^*=\J$.

\item Tensor $\X$ is \textbf{f-diagonal} if each frontal slice $\X(:,:,i_3,...,i_m)$ is diagonal for all  $i_j\in [n_j], j\in\{3,...,m\}$.

\item  The \textbf{tensor Singular Value Decomposition (t-SVD)} of tensor $\X\in\R^{n_1\times n_2\times n_3\times\ldots\times n_m}$ induced by $*_L$ is
\[
\mathcal{X}=\mathcal{U}*_L\mathcal{S}*_L\mathcal{V}^*\] where $\mathcal{U}\in\R^{n_1\times n_1\times n_3\times\ldots\times n_m}$ and $\mathcal{V}\in\R^{n_2\times n_2\times n_3\times \ldots\times n_m}$ are orthogonal and the core tensor $\mathcal{S}\in\R^{n_1\times n_2\times n_3\times\ldots\times n_m}$ is f-diagonal.
   \end{itemize}
\end{definition}

Since calculating the t-product with the Fourier transform may require complex arithmetic, the general linear transform $L$ is employed to extend the classical definition of the t-product. With this definition we are not limited to using the Fourier transform to compute the t-product, and instead may employ any invertible linear transform such as the discrete cosine transform (DCT) when each $U_{n_i}$ is a DCT transform matrix.

\begin{corollary}\label{cor:sep}
There exist several separability properties about the generalized t-product including separability in the first dimension. Consider $\A\in\R^{n_1\times n_2 \times n_3\times ...\times n_m}$ and $\X\in\R^{n_2\times k\times n_3\times ...\times n_m}$, and let $\A(i) = \A(i,:,:,;...,:)$ denote the $i$th horizontal slice of size $1\times n_2\times n_3...\times n_m$. Then,
\[ (\A *_L \X) (i) = L^{-1}(\A_L(i)\Delta \X_L).\]
\end{corollary}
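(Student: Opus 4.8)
The plan is to deduce the identity from two structural facts about the ingredients of $*_L$: the transform $L$ (and its inverse) acts only on the modes $3,\dots,m$, so it commutes with extracting a horizontal slice; and the facewise product $\Delta$ is separable in the first mode. Once these are in hand, the claim follows by simply unwinding the definition \eqref{ltprod}.

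First I would record the commutation with slicing. For each $j\ge 3$ the mode-$j$ product obeys $(\A\times_j U_{n_j})(i)=\A(i)\times_j U_{n_j}$: in the mode-$j$ unfolding $\A_{(j)}$ the first index $i_1$ of $\A$ is merely one of the column labels, so left multiplication by $U_{n_j}$ never couples it with the other indices. Composing over $j=3,\dots,m$ gives $\A_L(i)=(\A(i))_L$, and the identical argument applied to the inverse transform gives the version actually needed below, $L^{-1}(\C)(i)=L^{-1}\bigl(\C(i)\bigr)$ for any tensor $\C$ of conformable size.

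Next I would prove the separability of $\Delta$, namely $(\A\Delta\B)(i)=\A(i)\Delta\B$. By Definition~\ref{def:tensors}, $\text{bdiag}(\A\Delta\B)=\text{bdiag}(\A)\cdot\text{bdiag}(\B)$, so every frontal face of $\A\Delta\B$ is the matrix product $A^jB^j$ of the corresponding faces, and the $i$th row of $A^jB^j$ equals $(i$th row of $A^j)\,B^j$, which is precisely the $j$th face of $\A(i)\Delta\B$. The one point to verify carefully is the block-diagonal bookkeeping: the $i$th row of the $j$th diagonal block of $\text{bdiag}(\A)$ occupies global row $(j-1)n_1+i$ and is supported on the $j$th column block, so after right multiplication by the block-diagonal $\text{bdiag}(\B)$ it remains supported on the $j$th column block and equals $(i$th row of $A^j)\,B^j$; collecting the rows $\{(j-1)n_1+i\}_{j=1}^{J}$ thus reconstructs exactly $\text{bdiag}(\A(i)\Delta\B)$. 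This index-tracking is the only mildly delicate step; everything else is formal.

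Finally, chaining the two facts with \eqref{ltprod} yields
\[
(\A*_L\X)(i)=\bigl(L^{-1}(\A_L\Delta\X_L)\bigr)(i)=L^{-1}\bigl((\A_L\Delta\X_L)(i)\bigr)=L^{-1}\bigl(\A_L(i)\Delta\X_L\bigr),
\]
which is the assertion (and, using $\A_L(i)=(\A(i))_L$ from the first step, the right-hand side may equivalently be written as $L^{-1}((\A(i))_L\Delta\X_L)$). The main obstacle, as noted, is purely the block-diagonal index bookkeeping in the separability step; the rest is a direct consequence of the definitions of the mode-$i$ product, the facewise product, and $*_L$.
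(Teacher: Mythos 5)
Your proof is correct. The paper states this corollary without proof, treating it as immediate from the definitions, and your argument is exactly the natural verification: $L$ and $L^{-1}$ act only on modes $3,\dots,m$ and hence commute with extracting a first-mode slice, while the facewise product is row-separable in the first mode, so the identity follows by unwinding \eqref{ltprod}. The only comment is that the block-diagonal index bookkeeping you flag as the delicate step is not actually needed: since $\Delta$ is defined face by face, it suffices to observe on each frontal face that the $i$th row of $A^jB^j$ equals the $i$th row of $A^j$ times $B^j$, with no global row-indexing required.
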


\begin{remark}
    Note, in \cite{qin2022low} and \cite{lu2018tensor}, the algorithm used to calculate the conjugate transpose acts on the frontal slices of $\A$. Thus, for any tensor $\A$, we have
\begin{align} \label{bdiag}
    (\text{bdiag}(\A))^* = \text{bdiag}(\A^*).
\end{align}
By \eqref{property2} and the fact that the conjugate transpose can be implemented on the frontal slices, we have
\begin{align}\label{property3}
    (\A_L)^*(:,:,i_3,...,i_m) = (\A_L(:,:,i_3,...,i_m))^* = (\A^*)_L(:,:,i_3,...,i_m).
\end{align}
\end{remark}

\begin{lemma}\label{conjugate} If $\A\in\R^{n_1\times n_2 \times n_3\times ...\times n_m}$, $X\in\R^{n_2\times k\times n_3\times ...\times n_m}$ and $B\in\R^{n_1\times k\times n_3\times ...\times n_m}$, then
 \[\langle \A*_L\X,\B\rangle = \langle \X, (\A^*)*_L\B\rangle.\]
 \end{lemma}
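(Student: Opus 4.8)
The plan is to reduce the tensor identity to the corresponding matrix adjoint identity through the block-diagonalization in the transform domain, using the inner-product property \eqref{property1}. First I would write $\langle \A *_L \X, \B\rangle$ and apply \eqref{property1} to pass to the transform domain: this gives $\frac{1}{\rho}\langle \text{bdiag}((\A *_L \X)_L), \text{bdiag}(\B_L)\rangle$. By the definition \eqref{ltprod} of the $*_L$ product, $(\A *_L \X)_L = \A_L \Delta \X_L$, and by the definition of the facewise product, $\text{bdiag}(\A_L \Delta \X_L) = \text{bdiag}(\A_L)\cdot\text{bdiag}(\X_L)$, ordinary matrix multiplication. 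So the left side becomes $\frac{1}{\rho}\langle \text{bdiag}(\A_L)\cdot\text{bdiag}(\X_L), \text{bdiag}(\B_L)\rangle$.

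Next I would invoke the standard matrix adjoint relation $\langle M N, P\rangle = \langle N, M^* P\rangle$ for the Frobenius inner product, with $M = \text{bdiag}(\A_L)$, $N = \text{bdiag}(\X_L)$, $P = \text{bdiag}(\B_L)$. This rewrites the quantity as $\frac{1}{\rho}\langle \text{bdiag}(\X_L), (\text{bdiag}(\A_L))^* \cdot \text{bdiag}(\B_L)\rangle$. Now I would use \eqref{bdiag}, i.e.\ $(\text{bdiag}(\A_L))^* = \text{bdiag}((\A_L)^*)$, together with \eqref{property3}, which identifies $(\A_L)^*$ facewise with $(\A^*)_L$; hence $(\text{bdiag}(\A_L))^* = \text{bdiag}((\A^*)_L)$. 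Substituting and again recognizing a facewise product in block-diagonal form, $\text{bdiag}((\A^*)_L)\cdot\text{bdiag}(\B_L) = \text{bdiag}((\A^* *_L \B)_L)$.

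Finally I would apply \eqref{property1} in reverse to collapse $\frac{1}{\rho}\langle \text{bdiag}(\X_L), \text{bdiag}((\A^* *_L \B)_L)\rangle$ back to $\langle \X, \A^* *_L \B\rangle$, which is the claimed identity. A small point worth checking is that the dimensions match so that all the block-diagonal matrix products are well defined: $\text{bdiag}(\A_L)$ is $n_1 n_3\cdots n_m \times n_2 n_3\cdots n_m$, $\text{bdiag}(\X_L)$ is $n_2 n_3\cdots n_m \times k n_3\cdots n_m$, so their product has the shape of $\text{bdiag}(\B_L)$, and likewise $(\text{bdiag}(\A_L))^*$ is $n_2 n_3\cdots n_m \times n_1 n_3\cdots n_m$, which composes correctly with $\text{bdiag}(\B_L)$.

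The main obstacle, such as it is, is purely bookkeeping: making sure the block-diagonal structure is preserved under each step — in particular that $\text{bdiag}$ of a facewise product is the ordinary product of the $\text{bdiag}$'s (this is exactly the definition of $\Delta$) and that conjugate-transposing $\text{bdiag}(\A_L)$ yields $\text{bdiag}((\A^*)_L)$ rather than something that mixes blocks; both are guaranteed by \eqref{bdiag} and \eqref{property3}. There is no analytic difficulty; once the transform-domain dictionary is set up the result is the matrix adjoint identity applied blockwise.
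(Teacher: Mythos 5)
Your proposal is correct and follows essentially the same route as the paper's proof: pass to the transform domain via \eqref{property1}, reduce to the matrix adjoint identity on the block-diagonal factors, move the conjugate transpose through $\text{bdiag}$ using \eqref{bdiag} and \eqref{property3}, and return via \eqref{property1}. The added dimension check is a harmless bonus; no gaps.
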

\begin{proof}

It is straightforward to verify that $L(\A*_{L}\X)=\A_L\Delta \X_L$. Next we compute the inner product of a t-product and a tensor as follows
\begin{align*}
    \langle \A*_L\X,\B\rangle &=\langle L^{-1}(\A_L\Delta\X_L),\B\rangle \\
    &= \frac{1}{\rho}\langle \text{bdiag}(\A_L\Delta\X_L),\text{bdiag}(\B_L)\rangle& \text{by
 }\eqref{property1}\\
    &= \frac{1}{\rho}\langle \text{bdiag}(\A_L)\text{bdiag}(\X_L),\text{bdiag}(\B_L)\rangle\\
    &= \frac{1}{\rho}\langle \text{bdiag}(\X_L),(\text{bdiag}(\A_L))^*\text{bdiag}(\B_L)\rangle\\
    &= \frac{1}{\rho}\langle \text{bdiag}(\X_L),\text{bdiag}((\A_L)^*)\text{bdiag}(\B_L)\rangle
    & \text{by }\eqref{bdiag}\\
    &= \frac{1}{\rho}\langle \text{bdiag}(\X_L),\text{bdiag}((\A_L)^*\Delta \B_L)\rangle \\
    &= \frac{1}{\rho}\langle \text{bdiag}(\X_L),\text{bdiag}((\A^*)_L\Delta \B_L)\rangle&\text{by
 }\eqref{property3}\\
    &= \langle \X,\A^**_L\B)\rangle& \text{by
 }\eqref{property1}
\end{align*}
\end{proof}

\begin{definition}
    A function $f: \R^{n_1\times n_2\times ...\times n_m}\rightarrow \R$ is \textbf{$\alpha$-strongly convex }for some $\alpha>0$ if
 it satisfies
 \begin{align}f(\Y)\geq f(\X)+\langle\X^*,\Y-\X\rangle +\frac{\alpha}{2}\|\Y-\X\|_F^2\label{alphaconvex}\end{align}
 for any $\X,\Y\in \R^{n_1\times n_2\times ...\times n_m}$, $\X^*\in\partial f(\X)$. Here $\partial f(\X)$ denotes the \textbf{subdifferential} of $f$ at $\X\in\R^{n_1\times n_2\times ...\times n_m}$ and is defined as
\[\partial f(\X) = \{\X^* : f(\Y)\geq f(\X) + \langle \X^*,\Y-\X\rangle \text{   for any  } \Y\in\R^{n_1\times n_2\times ...\times n_m}\}.\]
\end{definition}
In particular, if $f:\R^n\to\R$ is twice continuously differentiable, then $f$ is $\alpha$-strongly convex if and only if the Hessian matrix $\nabla^2 f\succeq \alpha I_{n}$, i.e., $\nabla^2f-\alpha I_{n}$ is positive semi-definite.

\begin{definition}  A function $f:\R^{n_1\times n_2\times ...\times n_m}\rightarrow \R$ is \textbf{admissible} if the dual problem $g_f$ as defined in \cite[Section 3.2]{chen2021regularized} is restricted strongly convex on any of $g_f$'s levels sets. Function $f$ is \textit{strongly admissible} if $g_f$ is restricted strongly convex on $\R^{n_1\times k\times n_3\times ...\times n_m}$.
\end{definition}

\section{Proposed Algorithms}\label{sec:Prop}
In this section, we propose two novel algorithms based on the regularized Kaczmarz algorithm, i.e., one for sparse tensor recovery and one for low-rank tensor recovery. In both algorithms, we consider a log-sum penalty (LSP) regularization which is commonly used as a replacement for the $l_0$ pseudo-norm in compressive sensing and low-rank matrix recovery.

\subsection{Sparse tensor recovery}
Given two tensors $\A\in\R^{n_1\times n_2\times n_3\times ...\times n_m}$ and $\B\in\R^{n_1\times l\times n_3\times ...\times n_m}$, we assume the underlying sparse tensor $\X\in\R^{n_2\times l\times n_3\times ...\times n_m}$ satisfies the linear relationship $\A*_L\X=\B$ where $*_L$ denotes the generalized t-product defined in \eqref{ltprod}. To recover $\X$ that is assumed to be sparse, we consider the following linear constrained tensor minimization problem:
\begin{equation}\hat{\X}=
\argmin_{\X}\lambda \LSP(\X)+\frac12\norm{\X}_F^2\quad\mbox{s.t.}\quad
\A*_{L}\X=\B,
\label{LSPmodel}
\end{equation}
where the regularization parameter $\lambda>0$, the tensor LSP regularizer is defined as
\[
\LSP(\X)=\sum_{i_1=1}^{n_1}\cdots\sum_{i_m=1}^{n_m}\log\left(1+\frac{|\X_{i_1,...,i_m}|}{\varepsilon}\right)
\]
and the parameter $\varepsilon>0$ controls the slope of the cusp at the origin. Here the tensor LSP regularizer is employed to promote the sparsity of the estimated tensor.

According to Corollary \ref{cor:sep}, the linear constraint $\A*_L\X = \B$ can be split into $n_1$ constraints
\begin{align}
    \A(i)*_L\X = \B(i),\quad i\in[n_1].
\end{align}
We define the row space as, $R(\A):=\{\A^T*_L\Y:\Y\in\R^{n_1\times l\times n_3\times ...\times n_m}\}$
and we assume the optimal solution $\hat{\X}$ to the problem \eqref{LSPmodel} satisfies
\[ \partial f(\hat{\X})\cap R(\A)\neq \emptyset.
\]
where $f(\hat{\X}) = \lambda \LSP(\hat{\X})+\frac12\norm{\hat{\X}}_F^2$. This assumption implies at least one subgradient of $f(\hat{\X})$ can be represented by $\A$ using the current tensor product $*_L$.
Since the objective function in \eqref{LSPmodel} is strongly convex under certain conditions (see Lemma~\ref{lem:strcvx}) and the constraint is linear, we develop an efficient algorithm based on the regularized Kaczmarz algorithm \cite{chen2021regularized}, which shows promising efficiency especially when the data size is large. Then we obtain two subproblems, one of which involves the proximity operator of the LSP regularizer.

The proximity operator for the vector LSP has been explored in recent literature  \cite{prater2022proximity,gong2013general}, thus we can extend these results to develop a Kaczmarz algorithm for \eqref{LSPmodel}. Specifically, we define the proximity operator for tensor LSP as follows:
\begin{equation}\label{eqn:LSP}
\prox_{\lambda \LSP}(\Z)
=\argmin_{\X}\lambda\LSP(\X)+\frac12\norm{\X-\Z}_F^2.
\end{equation}
Since $\LSP(\X)$ is additively separable, $\prox_{\lambda \text{LSP}}(\X)$ can be calculated component-wise using a closed form detailed in the Appendix of \cite{gong2013general} and extended to tensors in Algorithm \ref{alg:lspprox} where we choose $\lambda$ and $\varepsilon$ such that \begin{align}
\left(|z|+\varepsilon\right)^2\geq 4\lambda\label{paramcriteria}
\end{align} for all elements $z$ in tensor $\Z$. By adapting the regularized Kaczmarz algorithm from \cite{chen2021regularized} to the current $*_L$-based tensor product and log-sum regularized objective function, we propose a novel and efficient algorithm, detailed in Algorithm~\ref{alg:LSP}, which solves \eqref{LSPmodel} and uses one horizontal slice of $\A$ at each iteration. The existence of a closed-form solution for solving the LSP proximity operator enhances the efficiency of this algorithm.

\begin{algorithm}
\caption{Computing $\prox_{\lambda \text{LSP}}(\Z)$ for a tensor $\Z$}\label{alg:lspprox}
\begin{algorithmic}
\State\textbf{Input:}  $\Z\in\R^{n_1\times n_2\times ...\times n_m}$, $\varepsilon>0$, and $\lambda>0$ chosen such that \eqref{paramcriteria} holds.
\State\textbf{Output:} $\prox_{\lambda \text{LSP}}(\Z)$
\For{ each element $z$ of $\Z$}
\State $\Tilde{z}_1 = \max\left\{0,\frac{1}{2}\left[(|z|-\varepsilon)+\sqrt{(|z| + \varepsilon)^2 - 4 \lambda}\right]\right\}$
\State $\Tilde{z}_2 = \max\left\{0,\frac{1}{2}[(|z|-\varepsilon)-\sqrt{(|z| + \varepsilon)^2 - 4 \lambda}]\right\}$
\State $\Tilde{z}= \argmin_{x\in\{0,\Tilde{z}_1,\Tilde{z}_2\}} \frac{1}{2}(x-|z|)^2+\lambda \log(1+ \frac{x}{\varepsilon})$
\State $\prox_{\lambda \text{LSP}}(z) = \sign(z) \Tilde{z}$
\EndFor
\end{algorithmic}
\end{algorithm}

\begin{algorithm}[H]
\caption{LSP regularized Kaczmarz for sparse tensor recovery (LSPK-S)}\label{alg:LSP}
\begin{algorithmic}
\State\textbf{Input:} $\B\in\R^{n_1\times l\times n_3\times ...\times n_m}$, $\A\in\R^{n_1\times n_2\times n_3\times...\times n_m}$, $\lambda, \varepsilon>0$ chosen such that \eqref{paramcriteria} holds, stepsize $t>0$, max number of iterations $T$, invertible linear transform $L$, and tolerance $tol$.
\State\textbf{Output:} An approximation of $\hat{\X}$
\State\textbf{Initialize:}  $\Z^{(0)}\in R(\A)\subset \R^{n_2\times l\times n_3\times ...\times n_m}, \X^{(0)} = \prox_{\lambda \text{LSP}}(\Z^{(0)})$
\For{ $k={0},...,T-1$ }
\State choose index $i(k)$ cyclically or randomly from $[n_1]$
\State $\Z^{(k+1)} = \Z^{(k)}+t\A^*(i(k))*_L\frac{\B(i(k))-(\A*_L\X^{(k)})(i(k))}{||\A(i(k))||_F^2}   $
\State $\X^{(k+1)} = \prox_{\lambda \text{LSP}}(\Z^{(k+1)}) $
\State{Terminate if} $||\X^{(k+1)}-\X^{(k)}||_F/||\X^{(k)}||_F<tol$
\EndFor
\end{algorithmic}
\end{algorithm}

\subsection{Low-rank tensor recovery}
Since low-rankness of a matrix can be reflected by the sparsity of its singular values from SVD, we can extend this idea to the tensor case using the previous LSP regularized Kaczmarz algorithm from sparse tensor recovery to low-rank tensor recovery by applying the LSP regularization to the middle f-diagonal tensor in the t-SVD form of a tensor.
\begin{definition}
    The Nuclear LSP (NLSP) norm is defined as
    \begin{align}\label{eqn:NLSP}
        \|\X\|_{\text{NLSP}} = \sum_{i=1}^{\min\{n_1,n_2\}} \text{LSP}(\S(i,i,:,\ldots,:)),
    \end{align}
    where $ \U*_L\S*_L\V^T$ is a t-SVD representation of $\X$.
\end{definition}

Equipped with the NLSP regularizer, we consider the following low-rank tensor recovery problem
\begin{align}
\min_{\X}\lambda  \|\X\|_{\text{NLSP}}+\frac12\norm{\X}_F^2\quad\mbox{s.t.}\quad
\A*_{L}\X=\B.
\label{LSPsingmodel}
\end{align}
An explicit form of the proximity operator for LSP composed with the singular value function for matrices is derived in \cite{prater2022proximity}. We extend this result to the tensor case to find a closed-form solution to \eqref{LSPsingmodel}. Denote $\mathcal{P}(\X) = \{(\U,\V): \X = \U*_L\S*_L\V^T\}$. That is, $\mathcal{P}(\X)$ is the set of all pairs $(\U,\V)$ such that $ \U*_L\S*_L\V^T$ is a t-SVD representation of $\X$.
\begin{definition}
    The tensor singular value proximity operator of the nuclear LSP at $\Z\in \R^{n_1\times n_2\times...\times n_m}$ is defined as
    \begin{align}
\prox_{\lambda\text{NLSP}}(\Z)=\argmin_{\X}\lambda  \|\X\|_{\text{NLSP}}+\frac12\norm{\X-\Z}_F^2.
\label{LSPsinggen}
\end{align}
\end{definition}

\begin{theorem}\label{thm:lspsingprox}
For each $\Z\in \R^{n_1\times n_2\times...\times n_m}$, if $\X^*$ is a solution to \eqref{LSPsinggen}, then there exists a pair $(\U,\V)\in\mathcal{P}(\mathcal{Z})$ such that $\Z = \U*_L \S *_L \V^*$ and a tensor $\D\in \prox_{\lambda \LSP}(\S)$ such that $\X^*= \U*_L\D*_L\V^T$.
\end{theorem}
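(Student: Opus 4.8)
The plan is to transport, through the block-diagonalization dictionary \eqref{property1}--\eqref{froprop}, \eqref{property3} that identifies the $*_L$ t-SVD of $\Z$ with the ordinary SVDs of the frontal slices of $\Z_L$, the classical fact that the proximity operator of an absolutely symmetric spectral matrix function acts only on singular values; the matrix version of the present statement is precisely the result of \cite{prater2022proximity}. First I would record an invariance lemma: for orthogonal tensors $\U,\V$ and any conformable $\Y$ we have $\norm{\U*_L\Y*_L\V^*}_F=\norm{\Y}_F$. This follows from \eqref{froprop}, the identity $\text{bdiag}((\U*_L\Y*_L\V^*)_L)=\text{bdiag}(\U_L)\,\text{bdiag}(\Y_L)\,\text{bdiag}(\V_L)^*$, and the observation --- obtained by rewriting $\U^**_L\U=\V^**_L\V=\J$ via \eqref{bdiag} and \eqref{property3} --- that every frontal slice of $\U_L$ and of $\V_L$ is a unitary matrix. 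In particular $\norm{\X}_F=\norm{\S_\X}_F$ for any t-SVD core $\S_\X$ of $\X$, and likewise $\norm{\Z}_F=\norm{\S}_F$.

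The crux is a von Neumann trace inequality for $*_L$: writing $\langle\X,\Z\rangle=\tfrac1\rho\sum_j\langle X_L^{(j)},Z_L^{(j)}\rangle$ by \eqref{property1} and applying the classical matrix von Neumann trace inequality (together with its equality case) to each frontal slice, one gets $\langle\X,\Z\rangle\le\langle\S_\X,\S\rangle$ for the ordered nonnegative t-SVD cores $\S_\X,\S$ of $\X,\Z$, with equality only when $\X$ and $\Z$ admit a common reduction, i.e.\ there is $(\U,\V)\in\mathcal{P}(\Z)$ with $\U^**_L\Z*_L\V=\S$ and $\U^**_L\X*_L\V=\S_\X$, both f-diagonal. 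Granting this, fix a t-SVD $\Z=\U*_L\S*_L\V^*$ with ordered core $\S$. Using $\norm{\X}_{\text{NLSP}}=\LSP(\S_\X)$ (the off-diagonal entries of a core vanish), the invariance lemma, and the expansion $\tfrac12\norm{\X-\Z}_F^2=\tfrac12\norm{\S_\X}_F^2+\tfrac12\norm{\S}_F^2-\langle\X,\Z\rangle$, the von Neumann inequality yields, for every $\X$,
\[
\lambda\norm{\X}_{\text{NLSP}}+\tfrac12\norm{\X-\Z}_F^2\ \ge\ \lambda\LSP(\S_\X)+\tfrac12\norm{\S_\X-\S}_F^2 .
\]
Since $\LSP$ and $\tfrac12\norm{\cdot-\S}_F^2$ are separable over entries and $\prox_{\lambda\LSP}(0)=\{0\}$ under \eqref{paramcriteria}, the right-hand side is at least $\lambda\LSP(\D)+\tfrac12\norm{\D-\S}_F^2$ for $\D\in\prox_{\lambda\LSP}(\S)$, with equality forcing $\S_\X\in\prox_{\lambda\LSP}(\S)$. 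The objective in \eqref{LSPsinggen} is continuous and coercive, so a minimizer exists and its core shows the bound is attained; hence for a solution $\X^*$ both inequalities are equalities, giving $\S_{\X^*}\in\prox_{\lambda\LSP}(\S)$ and, by the equality case of the von Neumann step, a pair $(\U,\V)\in\mathcal{P}(\Z)$ with $\U^**_L\Z*_L\V=\S$ and $\X^*=\U*_L\S_{\X^*}*_L\V^*$. Taking $\D=\S_{\X^*}$ finishes the proof.

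The main obstacle is the von Neumann inequality for $*_L$ and, above all, making its equality case usable: one must check that the frontal-slice singular-vector bases that align $X_L^{(j)}$ and $Z_L^{(j)}$ for each $j$ can be chosen so that, after applying $L^{-1}$, they assemble into genuine orthogonal tensors $\U,\V$ --- this is where the structural hypothesis on $L$ (the $\rho I$ condition) and a careful treatment of repeated or zero singular values enter. A related subtlety worth checking is the compatibility between the two notions of core used above: $\norm{\cdot}_{\text{NLSP}}$ is read off from the original-domain core while $\prox_{\lambda\LSP}$ acts component-wise in the original domain, so one should confirm that $\LSP$ of the t-SVD core of $\U*_L\D*_L\V^*$ equals $\LSP(\D)$ and that $\prox_{\lambda\LSP}(\S)$ meets the set of valid (ordered, nonnegative) cores, so that the reduction is tight. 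Once these are in place, the invariance lemma and the separability of $\LSP$ make the remaining steps routine.
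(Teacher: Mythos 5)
Your proposal follows essentially the same route as the paper's own argument (which is given only as a remark extending Theorem 4 of the cited proximity-operator paper): orthogonal invariance of the Frobenius norm under $*_L$, a tensorial von Neumann trace inequality reducing the objective to the t-SVD cores, and separability identifying the minimizing core with $\prox_{\lambda\LSP}(\S)$. If anything, your version is more complete, since you address the equality case of the trace inequality needed to show that \emph{every} solution has the stated form --- the paper's remark only verifies that $\U*_L\D*_L\V^T$ \emph{is} a solution --- and the obstacles you flag (assembling the slice-wise singular-vector alignments into genuine real orthogonal tensors, and the nonnegativity/ordering of the prox output as a valid core) are genuine but are glossed over in the paper as well.
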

\begin{remark} This is an extension of the proof in Theorem $4$ in \cite{prater2022proximity}. We show that $\X^*= \U*_L\D*_L\V^T$ is a solution to \eqref{LSPsinggen}. Plugging \eqref{eqn:NLSP} into \eqref{LSPsinggen} yields
 \begin{align}
\min_{\X} \sum_{i=1}^{\min\{n_1,n_2\}} \lambda \text{LSP}(\D(i,i,:,...,:))+\frac12\norm{\X-\Z}_F^2
\label{LSPsinggenref}
\end{align}
where $\D$ is the core tensor in the t-SVD representation of $\X$.
We can verify that
\begin{align}
    \|\X-\Z\|_F^2  =
     &\|\X\|_F^2-2\langle\X,\Z\rangle+\|\Z\|_F^2\nonumber\\
      \geq&\|\D\|_F^2-2\langle \D,\S\rangle+\|\S\|_F^2=\norm{\D-\S}_F^2\label{normsimp}
\end{align}
where the last inequality is due to von Neumann's trace inequality and $\S$ is the core tensor in the t-SVD representation of $\Z$.
Then the optimization problem \eqref{LSPsinggenref} reduces to
\begin{align}
\min_{\D} \sum_{i=1}^{\min\{n_1,n_2\}} \lambda \text{LSP}(\D(i,i,:,...,:))+\frac12\norm{\D(i,i,:,...,:)-\S(i,i,:,...,:)}_F^2.
\label{LSPsingsep}
\end{align}
This objective function is now separable and the solution to \eqref{LSPsingsep} is $\prox_{\lambda \text{LSP}}(\S)$ from above.
\end{remark} Similar to the sparse case above, we adapt the regularized Kaczmarz algorithm from \cite{chen2021regularized} to the current $*_L$-based tensor product and consider the regularization term as the log-sum function composed with the singular value function. Thus we propose a new algorithm for solving the low-rank tensor recovery problem \eqref{LSPsingmodel}, which is described in Algorithm~\ref{alg:LSPsing}.

\begin{algorithm}[ht]
\caption{LSP regularized Kaczmarz for low-rank tensor recovery (LSPK-L)}\label{alg:LSPsing}
\begin{algorithmic}
\State\textbf{Input:} $\B\in\R^{n_1\times l\times n_3\times ...\times n_m}$, $\A\in\R^{n_1\times n_2\times n_3\times...\times n_m}$, $\lambda,\varepsilon>0$ chosen such that \eqref{paramcriteria} holds, stepsize $t>0$, max number of iterations $T$, invertible linear transform $L$, and tolerance $tol$.
\State\textbf{Output:} An approximation of $\hat{\X}$
\State\textbf{Initialize:} $\Z^{(0)}\in R(\A)\subset \R^{n_2\times l\times n_3\times ...\times n_m}$ calculate $\text{t-SVD}(\Z^{(0)})=\U*_L\S*_L\V^*$, $ \X^{(0)} = \U*_L\prox_{\lambda \text{LSP}}(S)*_L\V^* $
\For{ $k=0,...,T-1$ }
\State choose index $i(k)$ cyclically or randomly from $[n_1]$
\State $\Z^{(k+1)} = \Z^{(k)}+t\A^*(i(k))*_L\frac{\B(i(k))-(\A*_L\X^{(k)})(i(k))}{||\A(i(k))||_F^2}   $
\State $\X^{(k+1)} = \prox_{\lambda \text{NLSP}}(\Z^{(k+1)})$
\State{Terminate if} $||\X^{(k+1)}-\X^{(k)}||_F/||\X^{(k)}||_F<tol$
\EndFor
\end{algorithmic}
\end{algorithm}

\subsection{Complexity analysis}

For Algorithm \ref{alg:lspprox}, the overall complexity for an input tensor $\Z\in\R^{n_1\times\ldots\times n_m}$ is $O(n_1 n_2\cdots n_m)$. For Algorithm \ref{alg:LSP}, the complexity in each iteration contains two parts, i.e., updating $\Z^{(k+1)}$ and updating $\X^{(k+1)}$. Updating $\Z^{(k+1)}$ is dominated by the t-product and thus when FFT is chosen as the transform $L$, the complexity is $O(n_2\cdots n_m\log(n_3\cdots n_m)+n_2ln_3\cdots n_m)$. Since $\X^{(k)}$ is of size $n_2\times l\times n_3\times ...\times n_m$ we know from the proximity operator complexity that updating $\X^{(k+1)}$ will be $O(n_2 l n_3\cdots n_m)$. Likewise, complexity of Algorithm \ref{alg:LSPsing} in each iteration consists of two parts. Updating $\Z^{(k+1)}$ has the same cost as that in Algorithm \ref{alg:LSP}. Updating $\X^{(k+1)}$ requires $O(n_2\cdots n_m\log(n_3\cdots n_m)+n_2ln_3\cdots n_m+n_2l^2n_3...n_m)$. Note that for a general linear transform $L$ with transform matrices $\{U_{n_i}\}_{i=3}^m$, the $*_L$-based t-product requires a higher computational complexity of $O(n_2(n_3\cdots n_m)^2+n_2ln_3\cdots n_m)$.

\subsection{Convergence analysis}

In this section, we provide the convergence guarantees for the proposed algorithms. As shown in \cite{chen2021regularized}, strong convexity is a sufficient condition for ensuring the convergence of regularized Kaczmarz algorithms for tensor recovery, whose extension to the $*_L$-based high-order t-product is straightforward. Following the convergence analysis pipeline of \cite{chen2021regularized}, we begin by showing the strong convexity of the objective function \eqref{LSPmodel}. Subsequently, we present our main convergence results in Theorems \ref{thm:cycconv} and \ref{thm:randconv} for Algorithm \ref{alg:LSP} with index $i(k)$ chosen cyclically and randomly respectively.

\begin{lemma}\label{lem:strcvx}
    The objective function \eqref{LSPmodel} is $\alpha$-strongly convex when $\sqrt{\lambda}\leq \varepsilon$.
\end{lemma}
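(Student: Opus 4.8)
The plan is to exploit the complete separability of the objective. Writing $g(x) = \lambda\log(1+|x|/\varepsilon) + \frac12 x^2$, the objective in \eqref{LSPmodel} is $f(\X) = \sum_{i_1=1}^{n_1}\cdots\sum_{i_m=1}^{n_m} g(\X_{i_1,\ldots,i_m})$, i.e., the coordinatewise sum of a single scalar function. First I would record the elementary reduction: if $g:\R\to\R$ is $\alpha$-strongly convex, then $f$ is $\alpha$-strongly convex. This follows directly from the definition \eqref{alphaconvex}, since for a separable function the subdifferential is the Cartesian product of the coordinate subdifferentials ($\X^*\in\partial f(\X)$ iff $\X^*_{i_1,\ldots,i_m}\in\partial g(\X_{i_1,\ldots,i_m})$ for every index), and both $\langle\cdot,\cdot\rangle$ and $\|\cdot\|_F^2$ split across coordinates; summing the scalar strong-convexity inequalities over all indices then yields the tensor one. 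So the task reduces to the scalar claim.

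Next I would analyze $g$, which is even and $C^\infty$ away from the origin. For $x>0$ one computes $g'(x) = \lambda/(\varepsilon+x) + x$ and $g''(x) = 1 - \lambda/(\varepsilon+x)^2 \ge 1 - \lambda/\varepsilon^2$, and the case $x<0$ is identical by symmetry. This motivates the modulus $\alpha = 1 - \lambda/\varepsilon^2$, which is nonnegative precisely because $\sqrt{\lambda}\le\varepsilon$ (and strictly positive when $\sqrt{\lambda}<\varepsilon$). To upgrade the pointwise second-derivative bound to strong convexity I would show that $\phi(x) := g(x) - \frac{\alpha}{2}x^2 = \lambda\log(1+|x|/\varepsilon) + \frac{1-\alpha}{2}x^2$ is convex on all of $\R$. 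On $(0,\infty)$ and on $(-\infty,0)$ this is clear, since there $\phi''(x) = (1-\alpha) - \lambda/(\varepsilon+|x|)^2 \ge (1-\alpha) - \lambda/\varepsilon^2 = 0$.

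The one delicate step, and the place I expect the argument to need the most care, is gluing the two smooth convex branches at the cusp $x=0$. Here I would invoke the standard criterion that a continuous function which is convex and differentiable on each of $(-\infty,0]$ and $[0,\infty)$ is convex on $\R$ provided its left derivative at $0$ is at most its right derivative at $0$; for $\phi$ these one-sided derivatives equal $-\lambda/\varepsilon$ and $+\lambda/\varepsilon$, so the required inequality $-\lambda/\varepsilon\le\lambda/\varepsilon$ holds trivially. Equivalently, $\partial g(0) = [-\lambda/\varepsilon,\lambda/\varepsilon]$ is exactly the convex kink that makes \eqref{alphaconvex} hold at the origin, consistent with the role of \eqref{paramcriteria} in guaranteeing the proximity operator in Algorithm~\ref{alg:lspprox} is well defined. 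Hence $\phi$ is convex, $g$ is $\alpha$-strongly convex with $\alpha = 1-\lambda/\varepsilon^2$, and by the separability reduction $f$ is $\alpha$-strongly convex, which is the claim.
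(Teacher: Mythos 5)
Your proposal is correct and follows essentially the same route as the paper's proof: reduce by additive separability to the scalar function $\lambda\log(1+|x|/\varepsilon)+\tfrac12 x^2$ and bound its second derivative below by $\alpha=1-\lambda/\varepsilon^2$. In fact your argument is slightly more careful than the paper's, which only computes $h''(x)$ for $x\neq 0$ and asserts the conclusion, whereas you explicitly glue the two smooth convex branches of $g-\tfrac{\alpha}{2}x^2$ at the cusp via the one-sided derivative comparison $-\lambda/\varepsilon\le\lambda/\varepsilon$ --- a step the paper leaves implicit.
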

\begin{proof}Since the objective function is additively separable, it is sufficient to examine the behavior of a single-variable function of the form
\begin{equation}
    h(x) = \frac{1}{2}|x|^2 + \lambda \log\left(1+\frac{|x|}{\varepsilon}\right)
\end{equation}
and bound the second derivative from below. Specifically, we have
Specifically, we have
\begin{align*}
    h''(x) = 1 - \frac{\lambda}{\varepsilon^2 (1+\frac{|x|}{\varepsilon})^2}\in[1-\lambda/\varepsilon^2,1),\quad x\neq 0.
\end{align*}
Thus, when $\sqrt{\lambda}<\varepsilon$,  $h(x)$ is $\alpha$-strongly convex with $\alpha=1 - \frac{\lambda}{\varepsilon^2}$ which implies that the inequality in \eqref{alphaconvex} holds. The objective function \eqref{LSPmodel} can be rewritten as
\begin{align}
    f(\X) = \sum_{i_1=1}^{n_1}\cdots \sum_{i_m=1}^{n_m} h(\X_{i_1,i_2,...,i_m}).
\end{align}
Thus, $f(\X)$ is $\alpha$-strongly convex with $\alpha=1 - \frac{\lambda}{\varepsilon^2}$.
\end{proof}
\begin{lemma} If $\A\in\R^{n_1\times n_2 \times n_3\times ...\times n_m}$ and $\X\in\R^{n_2\times l\times n_3\times ...\times n_m}$, then we have
 \begin{align}
     \|\A*_L\X\|_F\leq \sqrt{\rho}\|\A\|_F\|\X\|_F.\label{convlemma1}
 \end{align}
 \end{lemma}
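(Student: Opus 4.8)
The plan is to reduce the tensor inequality to a statement about block diagonal matrices using the Frobenius norm identity \eqref{froprop} and the fact that the facewise product corresponds to ordinary matrix multiplication of block diagonals. First I would apply \eqref{froprop} to the tensor $\A*_L\X$, writing $\|\A*_L\X\|_F^2 = \frac{1}{\rho}\|\text{bdiag}((\A*_L\X)_L)\|_F^2$. Then I would use the defining relation $L(\A*_L\X) = \A_L\Delta\X_L$ together with the definition of the facewise product, which gives $\text{bdiag}((\A*_L\X)_L) = \text{bdiag}(\A_L)\cdot\text{bdiag}(\X_L)$.

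Next, I would invoke the standard submultiplicativity of the Frobenius norm under matrix multiplication, namely $\|MN\|_F \leq \|M\|_F\|N\|_F$ (or the slightly sharper $\|M\|_{2}\|N\|_F$), applied to $M = \text{bdiag}(\A_L)$ and $N = \text{bdiag}(\X_L)$. This yields $\|\text{bdiag}(\A_L)\cdot\text{bdiag}(\X_L)\|_F^2 \leq \|\text{bdiag}(\A_L)\|_F^2\,\|\text{bdiag}(\X_L)\|_F^2$. Finally, applying \eqref{froprop} in the reverse direction to each factor gives $\|\text{bdiag}(\A_L)\|_F^2 = \rho\|\A\|_F^2$ and $\|\text{bdiag}(\X_L)\|_F^2 = \rho\|\X\|_F^2$, so that $\|\A*_L\X\|_F^2 \leq \frac{1}{\rho}\cdot\rho\|\A\|_F^2\cdot\rho\|\X\|_F^2 = \rho\|\A\|_F^2\|\X\|_F^2$. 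Taking square roots completes the argument.

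I do not anticipate a serious obstacle here; the only point requiring mild care is making sure the block diagonal structure is respected when invoking submultiplicativity — that is, one should check that $\text{bdiag}(\A_L)\cdot\text{bdiag}(\X_L)$ is itself block diagonal with the blocks being the products of corresponding blocks, which is immediate from the definition of $\text{bdiag}$ and the conformability of the face dimensions ($\A$ has faces of size $n_1\times n_2$ and $\X$ has faces of size $n_2\times l$). One could also prove the inequality blockwise — bounding $\|A_L^j X_L^j\|_F \leq \|A_L^j\|_F\|X_L^j\|_F$ for each face $j$ and summing — but routing through the block diagonal matrices and \eqref{froprop} is cleaner and matches the style already used in the proof of Lemma~\ref{conjugate}.
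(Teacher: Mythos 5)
Your proposal is correct and follows essentially the same route as the paper's proof: reduce to block diagonal matrices via \eqref{froprop}, use $\text{bdiag}(\A_L\Delta\X_L)=\text{bdiag}(\A_L)\,\text{bdiag}(\X_L)$, apply Frobenius submultiplicativity, and convert back. (If anything, your citation of \eqref{froprop} is more precise than the paper's, which labels those steps as uses of \eqref{property1}.)
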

 \begin{proof}
 By following the definition of the Frobenius norm for tensors, we have
\begin{align*}
    \|\A*_L\X\|_F^2 &= \|L^{-1}(\A_L\Delta\X_L)\|_F^2\\
    & = \frac{1}{\rho}\|\text{bdiag}(\A_L\Delta\X_L)\|_F^2 &\text{by \eqref{property1}}\\
    & = \frac{1}{\rho}\|\text{bdiag}(\A_L)\text{bdiag}(\X_L)\|_F^2\\
    & \leq \frac{1}{\rho}\|\text{bdiag}(\A_L)\|_F^2\|\text{bdiag}(\X_L)\|_F^2\\
    & =  \frac{1}{\rho}\left(\rho\|\A\|_F^2\right)\left(\rho\|\X\|_F^2\right) &\text{by \eqref{property1}}\\
    & = \rho \|\A\|_F^2\|\X\|_F^2.
\end{align*}
After taking the square root on both sides, we obtain the desired result. Note that this result is consistent with that for third-order tensors with Fourier transform.
\end{proof}
In what follows, we will define a more general version of Bregman distance, as well as its decay property for the proposed algorithms.

\begin{definition}\label{bregman}

For any convex function $f: \R^{n_1\times n_2\times ...\times n_m}\rightarrow \R$, the \textbf{Bregman distance} between $\X$ and $\Y$ with respect to $f$ and $\X^*\in \partial f(\X)$ is defined as
\[D_{f,\X^*}(\X,\Y) = f(\Y)-f(\X)-\langle \X^*,\Y-\X\rangle.\]
\end{definition}
\begin{definition} The \textbf{convex conjugate} of $f$ at $\Z$ is defined as
\[f^*(\Z) = \sup_{\X}\{\langle\Z,\X\rangle - f(\X)\}.\]
\end{definition}
Due to the duality property $\langle \X,\X^*\rangle = f(\X)+f^*(\X^*)$ when $\X^*\in \partial f(\X)$ \cite[Theorem 23.5]{rockafellar2015convex}, the Bregman distance can be written as
\[D_{f,\X^*}(\X,\Y) = f(\Y)+f^*(\X^*)-\langle \X^*,\Y\rangle.\] In general, if $f$ is $\alpha$-strongly convex, then the Bregman distance satisfies
\begin{align}\label{bregmanprop}
    \frac{\alpha}{2}\|\X-\Y\|_F^2\leq D_{f,\X^*}(\X,\Y).
\end{align}

\begin{proposition}\label{Prop1}
Suppose $\A\in\R^{1\times n_2 \times n_3\times ...\times n_m}$, $\B\in\R^{1\times l\times n_3\times ...\times n_m}$ and $f$ is an $\alpha$-strongly convex function defined on $\R^{n_2\times l\times n_3\times ...\times n_m}$. Given arbitrary $\overline{\Z}\in \R^{n_2\times l\times n_3\times ...\times n_m}$ and $\overline{\X}=\nabla f^*(\overline{\Z})$, let $\Z = \overline{\Z}+t\A^**_L\frac{\B-(\A*_L\overline{\X})}{||\A(i(k))||_F^2}$  and $\X = \nabla f^*(\Z).$ Then we have

\[D_{f,\Z}(\X,\mathcal{H})\leq D_{f,\overline{\Z}}(\overline{\X},\mathcal{H}) - \frac{t}{\|\A\|_F^2}\left(1-\frac{t \rho}{2\alpha}\right)\|\B-\A*_L\overline{\X}\|_F^2\]
for any $\mathcal{H}$ that satisfies $\A*_L\mathcal{H} = \B$.
\end{proposition}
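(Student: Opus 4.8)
The plan is to mimic the standard Bregman-distance decay argument for regularized Kaczmarz (as in \cite{chen2021regularized}), adapted to the $*_L$-based t-product using Lemma~\ref{conjugate} and the norm bound \eqref{convlemma1}. First I would use the convex-conjugate form of the Bregman distance, namely $D_{f,\Z}(\X,\H) = f(\H) + f^*(\Z) - \langle \Z,\H\rangle$, which is valid because $\X = \nabla f^*(\Z)$ means $\Z \in \partial f(\X)$ and hence the duality identity $\langle \Z,\X\rangle = f(\X) + f^*(\Z)$ holds. Writing the analogous expression for $D_{f,\overline{\Z}}(\overline{\X},\H)$, the difference telescopes to
\[
D_{f,\Z}(\X,\H) - D_{f,\overline{\Z}}(\overline{\X},\H) = f^*(\Z) - f^*(\overline{\Z}) - \langle \Z - \overline{\Z},\H\rangle.
\]
Then I would substitute $\Z - \overline{\Z} = \frac{t}{\|\A\|_F^2}\,\A^* *_L (\B - \A*_L\overline{\X})$ and use Lemma~\ref{conjugate} together with the constraint $\A*_L\H = \B$ to rewrite $\langle \Z - \overline{\Z}, \H\rangle = \frac{t}{\|\A\|_F^2}\langle \B - \A*_L\overline{\X}, \A*_L\H\rangle = \frac{t}{\|\A\|_F^2}\langle \B - \A*_L\overline{\X}, \B\rangle$.

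Next I would control the term $f^*(\Z) - f^*(\overline{\Z})$ from above. Since $f$ is $\alpha$-strongly convex, its conjugate $f^*$ is $\frac1\alpha$-smooth (gradient Lipschitz with constant $1/\alpha$), so
\[
f^*(\Z) \le f^*(\overline{\Z}) + \langle \nabla f^*(\overline{\Z}), \Z - \overline{\Z}\rangle + \frac{1}{2\alpha}\|\Z - \overline{\Z}\|_F^2 = f^*(\overline{\Z}) + \langle \overline{\X}, \Z - \overline{\Z}\rangle + \frac{1}{2\alpha}\|\Z - \overline{\Z}\|_F^2.
\]
Again using Lemma~\ref{conjugate}, $\langle \overline{\X}, \Z - \overline{\Z}\rangle = \frac{t}{\|\A\|_F^2}\langle \B - \A*_L\overline{\X}, \A*_L\overline{\X}\rangle$, and by \eqref{convlemma1} the quadratic term satisfies $\|\Z - \overline{\Z}\|_F^2 = \frac{t^2}{\|\A\|_F^4}\|\A^* *_L(\B - \A*_L\overline{\X})\|_F^2 \le \frac{t^2\rho}{\|\A\|_F^2}\|\B - \A*_L\overline{\X}\|_F^2$, using $\|\A^*\|_F = \|\A\|_F$ (which follows from \eqref{property2} and \eqref{froprop}).

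Combining these pieces, the difference of Bregman distances is bounded above by
\[
\frac{t}{\|\A\|_F^2}\langle \B - \A*_L\overline{\X},\, \A*_L\overline{\X} - \B\rangle + \frac{t^2\rho}{2\alpha\|\A\|_F^2}\|\B - \A*_L\overline{\X}\|_F^2 = -\frac{t}{\|\A\|_F^2}\left(1 - \frac{t\rho}{2\alpha}\right)\|\B - \A*_L\overline{\X}\|_F^2,
\]
which is exactly the claimed inequality. The main obstacle I anticipate is making the $f^*$-smoothness step fully rigorous: $f^*$ need only be differentiable and $\frac1\alpha$-smooth on the relevant set (it is the Moreau-type envelope conjugate of a strongly convex but nonsmooth $f$), so I would either cite the standard duality between strong convexity of $f$ and smoothness of $f^*$ \cite[Theorem 23.5]{rockafellar2015convex} or verify directly that $\nabla f^*$ is $\frac1\alpha$-Lipschitz from the $\alpha$-strong monotonicity of $\partial f$. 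A secondary point to check carefully is that $\B - \A*_L\overline{\X}$ lies in the range where the t-product identities apply and that the normalization by $\|\A(i(k))\|_F^2 = \|\A\|_F^2$ (here $\A$ is a single horizontal slice) is handled consistently throughout.
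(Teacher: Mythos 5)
Your proposal is correct and follows essentially the same route as the paper: the paper's proof computes exactly the two quantities you use, namely the bound $\|\A^**_L(\B-\A*_L\overline{\X})\|_F\le\sqrt{\rho}\,\|\A\|_F\|\B-\A*_L\overline{\X}\|_F$ and the identity $\langle \A^**_L(\B-\A*_L\overline{\X}),\H-\overline{\X}\rangle=\|\B-\A*_L\overline{\X}\|_F^2$, and then defers the conjugate-form Bregman telescoping and the $\tfrac1\alpha$-smoothness of $f^*$ to the argument in \cite{chen2021regularized}, which is precisely what you write out in full. Your only deviation is cosmetic and in fact an improvement: you correctly state the norm estimate as an inequality (which is all the argument needs), whereas the paper writes it as an equality.
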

\begin{proof}
    Let $\W = \A^**_L(\B-\A*_L\overline{\X})$ and $s = \frac{t}{\|\A\|_F^2}$. Then $\Z = \overline{Z}+s\W$ and by \eqref{convlemma1} we have
    \[\|\W\|_F = \|\A^**_L(\B-\A*_L\overline{\X})\|_F = \sqrt{\rho}\|\A\|_F\|\B-A*_L\overline{\X}\|_F.\]

By \eqref{property1} we have
\begin{align*}
    \langle \W,\mathcal{H}-\overline{\X}\rangle& = \langle  \A^**_L(\B-\A*_L\overline{\X}),\mathcal{H}-\overline{\X}\rangle\\
    &=  \langle  \A^**_L(\A*_L\mathcal{H}-\A*_L\overline{\X}),\mathcal{H}-\overline{\X}\rangle\\
    &=\langle  \A*_L\mathcal{H}-\A*_L\overline{\X}, \A*_L(\mathcal{H}-\overline{\X})\rangle\\
    &=\|\A*_L\mathcal{H}-\A*_L\overline{\X}\|_F^2\\
    &=\|\B-\A*_L\overline{\X}\|_F^2
\end{align*}
Then using these results the proof follows as in \cite{chen2021regularized}.
\end{proof}
Next, we will provide the convergence guarantee for Algorithm \ref{alg:LSP} when the index $i(k)$ is chosen cyclically.
\begin{theorem}\label{thm:cycconv}
Consider the objective function
\begin{align}
    f(\X) = \sum_{i_1=1}^{n_1}\cdots\sum_{i_m=1}^{n_m}\lambda\log\left(1+\frac{|\X_{i_1,...,i_m}|}{\varepsilon}\right)+\frac12\norm{\X}_F^2.\label{objfunc}
\end{align}
  If $t<\left(2 - \frac{2\lambda}{\varepsilon^2}\right)/\rho$, then the sequence $\{\X^{(k)}\}$ generated cyclically in Algorithm \ref{alg:LSP} satisfies
\begin{align}
    &D_{f,\Z^{(k+1)}}(\X^{(k+1)},\X)\notag\\
    &\leq D_{f,\Z^{(k)}}(\X^{(k)},\X) - t\left(1-\frac{t\rho}{\left(2 - \frac{2\lambda}{\varepsilon^2}\right)}\right)\frac{\| A(i(k))*_L(\X^{(k)}-\X)\|_F^2}{\|\A(i(k))\|_F^2}. \label{convthm}
\end{align}
Moreover, the sequence $\{\X^{(k)}\}$ converges to the solution of
\[\argmin_{\X}f(\X)\quad \mbox{s.t. }\quad{\A}*_L\X=\B.\]
\end{theorem}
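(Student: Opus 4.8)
The plan is to read off the one-step inequality \eqref{convthm} from Proposition~\ref{Prop1} applied to the active horizontal slice, and then to run the cyclic Kaczmarz convergence argument of \cite{chen2021regularized} in the Bregman-distance framework.

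Two bookkeeping facts make Proposition~\ref{Prop1} directly applicable. First, since $f=\lambda\LSP+\frac12\norm{\cdot}_F^2$, the update $\X=\prox_{\lambda\LSP}(\Z)$ minimizes $f(\X)-\langle\Z,\X\rangle$, so its optimality condition is $\Z\in\partial f(\X)$, which for the $\alpha$-strongly convex $f$ of Lemma~\ref{lem:strcvx} (with $\alpha=1-\lambda/\varepsilon^2$, positive because the hypothesis on $t$ forces $\lambda<\varepsilon^2$) is equivalent to $\X=\nabla f^*(\Z)$; by induction on $k$, $\X^{(k)}=\nabla f^*(\Z^{(k)})$ throughout. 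Second, Corollary~\ref{cor:sep} gives $(\A*_L\X^{(k)})(i(k))=\A(i(k))*_L\X^{(k)}$, so the $\Z$-update in Algorithm~\ref{alg:LSP} is exactly the update in Proposition~\ref{Prop1} with $\A\mapsto\A(i(k))$, $\B\mapsto\B(i(k))$, $\overline\Z\mapsto\Z^{(k)}$, $\overline\X\mapsto\X^{(k)}$. For any $\X$ with $\A*_L\X=\B$ (in particular the solution) we have $\A(i(k))*_L\X=\B(i(k))$, hence $\B(i(k))-\A(i(k))*_L\X^{(k)}=\A(i(k))*_L(\X-\X^{(k)})$, and Proposition~\ref{Prop1} with $\H=\X$ yields precisely \eqref{convthm}; the hypothesis $t<(2-2\lambda/\varepsilon^2)/\rho=2\alpha/\rho$ makes $t(1-t\rho/(2\alpha))>0$, so \eqref{convthm} is a strict decrease unless $\A(i(k))*_L(\X^{(k)}-\X)=\mathcal O$.

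For convergence, fix the minimizer $\hat\X$ of \eqref{LSPmodel} (unique, $f$ being strongly convex and the constraint affine) and set $V_k=D_{f,\Z^{(k)}}(\X^{(k)},\hat\X)$, so $V_k\geq\frac\alpha2\norm{\X^{(k)}-\hat\X}_F^2\geq0$ by \eqref{bregmanprop}. By \eqref{convthm}, $\{V_k\}$ is non-increasing (hence convergent) and $\sum_k \norm{\A(i(k))*_L(\X^{(k)}-\hat\X)}_F^2/\norm{\A(i(k))}_F^2<\infty$, so its $k$th summand vanishes. From the $\Z$-update and \eqref{convlemma1}, $\norm{\Z^{(k+1)}-\Z^{(k)}}_F=t\sqrt\rho\,\norm{\B(i(k))-\A(i(k))*_L\X^{(k)}}_F/\norm{\A(i(k))}_F\to0$, and since $\nabla f^*$ is $\frac1\alpha$-Lipschitz, $\norm{\X^{(k+1)}-\X^{(k)}}_F\to0$ as well. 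The bound $\frac\alpha2\norm{\X^{(k)}-\hat\X}_F^2\leq V_k\leq V_0$ shows $\{\X^{(k)}\}$ is bounded, hence $\{\Z^{(k)}\}$ is bounded because $\Z^{(k)}\in\partial f(\X^{(k)})$ and $\partial f$ is locally bounded. Pick a subsequence $k_j$ along which $i(k_j)$ is a fixed residue mod $n_1$ and $(\X^{(k_j)},\Z^{(k_j)})\to(\X^\infty,\Z^\infty)$; the vanishing of successive differences then forces $(\X^{(k_j+p)},\Z^{(k_j+p)})\to(\X^\infty,\Z^\infty)$ for each $p=0,\ldots,n_1-1$. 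Since $\norm{\A(i(k_j+p))*_L(\X^{(k_j+p)}-\hat\X)}_F\to0$ and $\{i(k_j+p):p=0,\ldots,n_1-1\}=[n_1]$, passing to the limit gives $\A(i)*_L\X^\infty=\A(i)*_L\hat\X=\B(i)$ for all $i$, i.e.\ $\A*_L\X^\infty=\B$. Moreover $\Z^\infty\in\partial f(\X^\infty)$ (graph of $\partial f$ closed) and $\Z^\infty\in R(\A)$ (a closed subspace containing every $\Z^{(k)}$, since $\Z^{(0)}\in R(\A)$ and each update adds an element of $R(\A)$). Thus $\X^\infty$ satisfies the KKT system for \eqref{LSPmodel}, namely feasibility together with $\partial f(\X^\infty)\cap R(\A)\neq\emptyset$; by sufficiency of KKT for this convex program, $\X^\infty$ is a minimizer, so $\X^\infty=\hat\X$. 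Finally $V_{k_j}=f(\hat\X)-f(\X^{(k_j)})-\langle\Z^{(k_j)},\hat\X-\X^{(k_j)}\rangle\to0$ by continuity of $f$, so the monotone sequence $\{V_k\}$ converges to $0$, and \eqref{bregmanprop} gives $\X^{(k)}\to\hat\X$.

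The reductions in the first two paragraphs are routine. The crux is the third paragraph: establishing boundedness of the dual iterates $\Z^{(k)}$, and---because selection is cyclic---carrying a single subsequential limit across one full cycle of length $n_1$ so that all $n_1$ equations are simultaneously satisfied in the limit, after which the KKT conditions of \eqref{LSPmodel} (whose dual certificate lies in $R(\A)$, matching the standing assumption on \eqref{LSPmodel}) pin the limit down as $\hat\X$. If $i(k)$ were drawn randomly instead, this step is replaced by the expectation estimate of Theorem~\ref{thm:randconv}.
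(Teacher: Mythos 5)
Your proof is correct and follows essentially the same route as the paper: the one-step decay \eqref{convthm} is obtained, exactly as the paper does, by applying Proposition~\ref{Prop1} to the active slice $\A(i(k))$ (justified via Corollary~\ref{cor:sep} and the identification $\X^{(k)}=\nabla f^*(\Z^{(k)})$ coming from the strong convexity in Lemma~\ref{lem:strcvx}), and your convergence argument is a faithful execution of the extension of \cite[Theorem 3.3]{chen2021regularized} that the paper merely cites. The only substantive addition is that you actually carry out the boundedness, subsequential-limit, and KKT identification across one full cycle of length $n_1$, which the paper delegates to that reference.
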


\begin{remark}
The restriction $t<(2 - \frac{2\lambda}{\varepsilon^2})/\rho$ guarantees that $\sqrt{\lambda}<\varepsilon$, which in turn guarantees that the function $f$ is $\alpha$-strongly convex with $\alpha = 1 - \frac{\lambda}{\varepsilon^2}$. Thus, the proof of \eqref{convthm} follows directly from the proposition above when we take
    \[
        \A(i(k)) = \A,\quad
        \B(i(k)) = \B,\quad
        \Z^{(k)} = \overline{Z},\quad
        \Z^{(k+1)} = \Z.
    \] The proof of the convergence of $\{\X^{(k)}\}$ can be extended from the proof in \cite[Theorem 3.3]{chen2021regularized}.
\end{remark}

When the sequence of $i(k)$ is chosen randomly, it can be shown Algorithm 2 converges linearly in expectation.
To start with, we adapt \cite[Lemma 3.6]{chen2021regularized} to the current high-order tensor setting.
\begin{lemma}\label{lemma:nu}
 Assume $f$ is admissible, and let $\hat{\X}$ be the solution to \eqref{LSPmodel}. For $\Tilde{\X}$ and $\Tilde{\Z}\in \partial f(\Tilde{\X})\cap R(\A)$, there exists $\nu>0$ such that for all $\X$ and $\Z\in \partial f(\X)\cap R(\A)$ with $D_{f,\Z}(\X,\hat{\X})\leq D_{f,\Tilde{\Z}}(\Tilde{\X},\hat{\X})$, it holds that
\begin{align}\label{nulemma}
    D_{f,\Z}(\X,\hat{\X})\leq \frac{1}{\nu}\|\A*_L(\X-\hat{\X})\|_F^2.
\end{align}
\end{lemma}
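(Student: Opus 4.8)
The plan is to mirror the proof of \cite[Lemma 3.6]{chen2021regularized}, transporting each matrix-level step to the $*_L$-based high-order tensor setting via the block-diagonalization properties \eqref{property1}--\eqref{froprop} and Lemma~\ref{conjugate}. First I would recall the admissibility hypothesis: the dual function $g_f$ associated with $f$ (defined through $g_f(\Y) = f^*(\A^* *_L \Y) - \langle \B, \Y\rangle$, as in \cite[Section 3.2]{chen2021regularized}) is restricted strongly convex on its sublevel sets. The key dictionary entry is that, since $\Z \in \partial f(\X)\cap R(\A)$, we may write $\Z = \A^* *_L \Y$ for some $\Y$, and then $\X = \nabla f^*(\Z)$; the Bregman distance $D_{f,\Z}(\X,\hat\X)$ equals the Bregman distance of $g_f$ between the corresponding dual points, which is the standard primal-dual correspondence used in \cite{chen2021regularized}. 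The sublevel-set condition $D_{f,\Z}(\X,\hat\X) \le D_{f,\Tilde\Z}(\Tilde\X,\hat\X)$ translates exactly into $\Y$ lying in a fixed sublevel set of $g_f$ determined by $\Tilde\Y$, so restricted strong convexity of $g_f$ applies on that set with some modulus, and $\nu$ is that modulus (independent of $\X$, depending only on $\Tilde\X,\Tilde\Z$).

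Next I would unpack what restricted strong convexity of $g_f$ gives. It yields a lower bound of the form $g_f(\Y) - g_f(\hat\Y) - \langle \nabla g_f(\hat\Y), \Y - \hat\Y\rangle \ge \tfrac{\nu}{?}\,\mathrm{dist}(\Y,\cdot)^2$, but the relevant consequence — exactly as extracted in \cite{chen2021regularized} — is that the Bregman distance in the dual is controlled below by a multiple of $\|\nabla g_f(\Y) - \nabla g_f(\hat\Y)\|_F^2$. Since $\nabla g_f(\Y) = \A *_L \nabla f^*(\A^* *_L \Y) - \B = \A *_L \X - \B$ and likewise $\nabla g_f(\hat\Y) = \A *_L \hat\X - \B = \mathcal{O}$ because $\A *_L \hat\X = \B$, this difference is precisely $\A *_L (\X - \hat\X)$. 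Combining the primal-dual identification of Bregman distances with this gradient computation gives $D_{f,\Z}(\X,\hat\X) \le \tfrac{1}{\nu}\|\A *_L(\X - \hat\X)\|_F^2$, which is \eqref{nulemma}.

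The main obstacle is making the dual-function machinery of \cite[Section 3.2]{chen2021regularized} rigorous in the $*_L$ setting — in particular verifying that $g_f$ is well-defined and differentiable with $\nabla g_f(\Y) = \A *_L \nabla f^*(\A^* *_L \Y) - \B$, that the adjoint relation $\langle \A *_L \X, \Y\rangle = \langle \X, \A^* *_L \Y\rangle$ (which is Lemma~\ref{conjugate}) is exactly what is needed in the chain rule, and that restricted strong convexity transfers through the invertible linear transform $L$ unchanged. Everything here reduces, via \eqref{property1} and \eqref{froprop}, to the already-established matrix case on $\mathrm{bdiag}(\cdot_L)$, so no genuinely new estimate is required; the work is bookkeeping to confirm that the constant $\rho$ and the transform $L$ do not interfere, and that the sublevel-set hypothesis is preserved under the primal-dual correspondence. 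Once the dual picture is set up, the proof is a direct transcription of \cite{chen2021regularized}, so I would state it as such and only spell out the tensor-specific identities.
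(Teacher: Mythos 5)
Your proposal is correct and follows essentially the route the paper intends: the paper gives no independent proof of Lemma~\ref{lemma:nu}, stating only that it adapts \cite[Lemma 3.6]{chen2021regularized}, and your sketch is precisely that adaptation—identifying $D_{f,\Z}(\X,\hat\X)$ with the dual gap of $g_f$ via $\Z=\A^**_L\Y$ and Lemma~\ref{conjugate}, invoking restricted strong convexity on the sublevel set determined by $\Tilde\Z$, and computing $\nabla g_f(\Y)=\A*_L\X-\B=\A*_L(\X-\hat\X)$. (Only a wording slip: the dual gap is bounded \emph{above}, not ``below,'' by $\tfrac{1}{\nu}\|\nabla g_f(\Y)\|_F^2$, as your final inequality correctly reflects.)
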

Thus for Algorithm \ref{alg:LSP} with index $i(k)$ chosen randomly we have the following result.

\begin{theorem}\label{thm:randconv}
     Consider the objective function
\begin{align}
    f(\X) = \sum_{i_1=1}^{n_1}\cdots\sum_{i_m=1}^{n_m}\lambda\log\left(1+\frac{|\X_{i_1,...,i_m}|}{\varepsilon}\right)+\frac12\norm{\X}_F^2.
\end{align}
  If $\sqrt{\lambda}<\varepsilon$, and $0<\frac{\nu t}{\|\A\|_F^2}(1-\frac{t \rho\varepsilon^2}{(2\varepsilon^2-2\lambda)})<1$, then $\X^{(k)}$ converges linearly to the solution $\hat{\X}$ in expectation. Specifically,

\[\mathbb{E}\left[D_{f,\Z^{(k+1)}}(\X^{(k+1)},\hat{\X})\right]\leq \left(1-\frac{\nu t}{\|\A\|_F^2}(1-\frac{t \rho\varepsilon^2}{(2\varepsilon^2-2\lambda)})\right)\mathbb{E}\left[D_{f,\Z^{(k)}}(\X^{(k)},\hat{\X})\right].\]
Therefore:
\[\mathbb{E}\|\X^{(k)}-\hat{\X}\|_F^2\leq \left[\frac{2\varepsilon^2}{(\varepsilon^2-\lambda)}D_{f,\Z^{(0)}}(\X^{(0)},\hat{\X})\right]\left(1-\frac{\nu t}{\|\A\|_F^2}(1-\frac{t \rho\varepsilon^2}{(2\varepsilon^2-2\lambda)})\right)^k\]
 where we assume that the probability of choosing index $i(k)=j$ from $[n_1]$ in Algorithm \ref{alg:LSP} is proportional to $\|\A(j)\|_F^2$ which is a customary choice in Kaczmarz literature \cite{strohmer2009randomized}.
\end{theorem}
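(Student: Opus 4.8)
The plan is to follow the randomized Kaczmarz convergence pipeline of \cite{chen2021regularized}, combining the one-step Bregman decay of Proposition~\ref{Prop1} with the restricted strong convexity estimate of Lemma~\ref{lemma:nu} and then unwinding the resulting recursion. First I would record the structural facts that let Algorithm~\ref{alg:LSP} be cast in the form of Proposition~\ref{Prop1}: writing $f(\X)=\lambda\LSP(\X)+\tfrac12\norm{\X}_F^2$, one checks that $\nabla f^*(\Z)=\prox_{\lambda\LSP}(\Z)$, so the iterates obey $\X^{(k)}=\nabla f^*(\Z^{(k)})$ and $\Z^{(k)}\in\partial f(\X^{(k)})$, and an easy induction gives $\Z^{(k)}\in R(\A)$ for all $k$ since $\Z^{(0)}\in R(\A)$ and each update adds a term of $R(\A)$. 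The hypothesis $\sqrt{\lambda}<\varepsilon$ makes $\alpha:=1-\lambda/\varepsilon^2\in(0,1)$, so $f$ is $\alpha$-strongly convex by Lemma~\ref{lem:strcvx}, and the assumed inequality $0<\tfrac{\nu t}{\|\A\|_F^2}\bigl(1-\tfrac{t\rho\varepsilon^2}{2\varepsilon^2-2\lambda}\bigr)$ forces $1-\tfrac{t\rho}{2\alpha}>0$, which is exactly the positivity needed in Proposition~\ref{Prop1} (note $2\alpha=(2\varepsilon^2-2\lambda)/\varepsilon^2$).

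Next I would apply Proposition~\ref{Prop1} to the single horizontal slice $\A(i(k))$, $\B(i(k))$ with $\mathcal{H}=\hat\X$, which is admissible since $\A*_L\hat\X=\B$ implies $\A(i(k))*_L\hat\X=\B(i(k))$. Using $\B(i(k))-\A(i(k))*_L\X^{(k)}=\A(i(k))*_L(\hat\X-\X^{(k)})$, this yields, for every realization of the index,
\[
D_{f,\Z^{(k+1)}}(\X^{(k+1)},\hat\X)\le D_{f,\Z^{(k)}}(\X^{(k)},\hat\X)-\frac{t}{\|\A(i(k))\|_F^2}\left(1-\frac{t\rho}{2\alpha}\right)\left\|\A(i(k))*_L(\X^{(k)}-\hat\X)\right\|_F^2 .
\]
In particular the Bregman distance is almost surely nonincreasing, so by induction $D_{f,\Z^{(k)}}(\X^{(k)},\hat\X)\le D_{f,\Z^{(0)}}(\X^{(0)},\hat\X)$ for every $k$; taking $\tilde\X=\X^{(0)}$ in Lemma~\ref{lemma:nu}, this keeps all iterates inside the sublevel set where that lemma applies. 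Taking the conditional expectation over $i(k)$, drawn from $[n_1]$ with probability $\|\A(j)\|_F^2/\|\A\|_F^2$, the weights cancel the $1/\|\A(i(k))\|_F^2$ factor, and the separability identity $\sum_{j=1}^{n_1}\|\A(j)*_L\X\|_F^2=\|\A*_L\X\|_F^2$ (Corollary~\ref{cor:sep} together with additivity of $\norm{\cdot}_F^2$ over horizontal slices) gives
\[
\mathbb{E}\left[D_{f,\Z^{(k+1)}}(\X^{(k+1)},\hat\X)\mid\X^{(k)}\right]\le D_{f,\Z^{(k)}}(\X^{(k)},\hat\X)-\frac{t}{\|\A\|_F^2}\left(1-\frac{t\rho}{2\alpha}\right)\left\|\A*_L(\X^{(k)}-\hat\X)\right\|_F^2 .
\]

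To close, I would bound the residual via Lemma~\ref{lemma:nu}, $\|\A*_L(\X^{(k)}-\hat\X)\|_F^2\ge\nu\,D_{f,\Z^{(k)}}(\X^{(k)},\hat\X)$, which turns the previous inequality into the one-step contraction $\mathbb{E}[D_{f,\Z^{(k+1)}}(\X^{(k+1)},\hat\X)\mid\X^{(k)}]\le(1-c)\,D_{f,\Z^{(k)}}(\X^{(k)},\hat\X)$ with $c=\tfrac{\nu t}{\|\A\|_F^2}\bigl(1-\tfrac{t\rho\varepsilon^2}{2\varepsilon^2-2\lambda}\bigr)\in(0,1)$ by hypothesis; taking full expectations (tower property) and iterating gives $\mathbb{E}[D_{f,\Z^{(k)}}(\X^{(k)},\hat\X)]\le(1-c)^k D_{f,\Z^{(0)}}(\X^{(0)},\hat\X)$, and then the strong-convexity lower bound \eqref{bregmanprop}, $\tfrac{\alpha}{2}\|\X^{(k)}-\hat\X\|_F^2\le D_{f,\Z^{(k)}}(\X^{(k)},\hat\X)$, together with $2/\alpha=2\varepsilon^2/(\varepsilon^2-\lambda)$, delivers the stated iterate bound. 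The step I expect to be the main obstacle is justifying the use of Lemma~\ref{lemma:nu} at each iteration: this requires the almost-sure monotonicity of the Bregman distance to be in place before any expectation is taken, and it requires checking that the probability weighting cancels exactly against $1/\|\A(i(k))\|_F^2$ so that the full-tensor residual $\|\A*_L(\X^{(k)}-\hat\X)\|_F^2$ (rather than a single-slice residual) reappears after averaging. The remaining work — matching the constants $\alpha$ and $2\alpha$, and carrying out the geometric summation — is routine.
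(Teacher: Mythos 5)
Your argument is correct and follows exactly the pipeline the paper intends (and spells out explicitly for the block non-overlapping variant in Section~\ref{sec:blockconv}): one-step Bregman decay from Proposition~\ref{Prop1} applied to a single slice with $\mathcal{H}=\hat\X$, conditional expectation with the $\|\A(j)\|_F^2/\|\A\|_F^2$ weighting to recover the full residual, Lemma~\ref{lemma:nu} to obtain the contraction factor, and \eqref{bregmanprop} with $\alpha=1-\lambda/\varepsilon^2$ to pass to the iterate bound. Your explicit justification that the almost-sure monotonicity of the Bregman distance keeps all iterates in the sublevel set where Lemma~\ref{lemma:nu} applies (with $\tilde\X=\X^{(0)}$) is a detail the paper leaves implicit, and is handled correctly.
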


\section{Extensions}\label{sec:Ext}
Motivated by accelerating our algorithms for large systems of equations, we present two extensions to our algorithms in this section, i.e., block versions with overlapping and non-overlapping indices.  Overlapping versus non-overlapping indices refers to how the blocks are chosen. In the non-overlapping case, we consider independent blocks where the indices are chosen without replacement. In the overlapping case, we consider blocks of indices chosen with replacement such that the blocks may contain overlapping indices. The block Kaczmarz variant considers a partition of the row indices of $\A$ into a specified number of blocks. Block algorithms with non-overlapping indices are commonly considered in the matrix Kaczmarz setting \cite{needell2014paved,needell2015randomized,necoara2019faster} to improve the convergence of the classical RKA method.

\subsection{Block Kaczmarz with non-overlapping indices}\label{sec:blockconv} We consider Algorithm \ref{alg:LSP} with blocks of indices chosen using a partition of the row indices $[n_1]$. Let the number of blocks be $M$. Then we have a partition $P = \{\tau_1,...,\tau_m\}$ such that the blocks are independent, i.e., the blocks are non-overlapping (NOL). At iteration $k$ of our algorithm we choose one block $\tau_k$ of indices and this choice may be performed randomly or cyclically. Choosing to use a partition with a larger number of blocks in turn accelerates each iteration of the algorithm as fewer row indices are selected at each step.
Using Theorem \ref{thm:randconv}, a linear convergence rate in expectation can be shown for Algorithm \ref{alg:LSP} with non-overlapping blocks.

\begin{theorem}
    Consider the objective function
\begin{align}
    f(\X) = \sum_{i_1=1}^{n_1}\cdots\sum_{i_m=1}^{n_m}\lambda\log\left(1+\frac{|\X_{i_1,...,i_m}|}{\varepsilon}\right)+\frac12\norm{\X}_F^2.
\end{align}
  If $\sqrt{\lambda}<\varepsilon$, and  $0<\frac{\nu t}{ M\|\A\|_F^2}(1-\frac{t \rho \varepsilon^2}{2\varepsilon^2 -2\lambda})<1$, then $\X^{(k)}$ converges linearly to the solution $\hat{\X}$ in expectation
via
\[\mathbb{E}\|\X^{(k)}-\hat{\X}\|_F^2\leq\left[\frac{2\varepsilon^2}{\varepsilon^2 - \lambda}D_{f,\Z^{(0)}}(\X^{(0)},\hat{\X})\right]\left(1-\frac{\nu t}{M\|\A\|_F^2}(1-\frac{t \rho \varepsilon^2}{2\varepsilon^2 -2\lambda})\right)^k.\]
\end{theorem}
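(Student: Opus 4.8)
The plan is to mirror the proof of Theorem~\ref{thm:randconv}, treating each block $\tau_j$ of the partition $P=\{\tau_1,\dots,\tau_M\}$ as a single ``generalized horizontal slice'' and then averaging uniformly over the $M$ blocks. First I would set up the block update: at iteration $k$, having chosen block $\tau_k$, write $\A_{\tau_k}$ and $\B_{\tau_k}$ for the sub-tensors obtained by stacking the horizontal slices $\{\A(i):i\in\tau_k\}$ and $\{\B(i):i\in\tau_k\}$, and note that the $*_L$-update of $\Z^{(k)}$ restricted to block $\tau_k$ is precisely the update in Proposition~\ref{Prop1} with $\A$ replaced by $\A_{\tau_k}$ and $\B$ by $\B_{\tau_k}$. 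The proof of Proposition~\ref{Prop1} never used that $\A$ has a single horizontal slice --- the only identity invoked is $\langle \A^**_L(\B-\A*_L\overline{\X}),\mathcal{H}-\overline{\X}\rangle=\|\B-\A*_L\overline{\X}\|_F^2$, which holds for any first dimension --- so Proposition~\ref{Prop1} applies verbatim to $\A_{\tau_k}$. Since $\sqrt{\lambda}<\varepsilon$, Lemma~\ref{lem:strcvx} gives that $f$ in \eqref{objfunc} is $\alpha$-strongly convex with $\alpha=1-\lambda/\varepsilon^2$, and $\frac{t\rho}{2\alpha}=\frac{t\rho\varepsilon^2}{2\varepsilon^2-2\lambda}$.

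Next I would take expectation over the uniform choice of $\tau_k$ among $\tau_1,\dots,\tau_M$. Applying Proposition~\ref{Prop1} blockwise, with $\mathcal{H}=\hat{\X}$ (so that $\A_{\tau_j}*_L\hat{\X}=\B_{\tau_j}$ for every $j$ by Corollary~\ref{cor:sep}), yields
\[
\mathbb{E}_{\tau_k}\!\left[D_{f,\Z^{(k+1)}}(\X^{(k+1)},\hat{\X})\right]\le D_{f,\Z^{(k)}}(\X^{(k)},\hat{\X})-\frac{t}{M}\Bigl(1-\frac{t\rho\varepsilon^2}{2\varepsilon^2-2\lambda}\Bigr)\sum_{j=1}^{M}\frac{\|\B_{\tau_j}-\A_{\tau_j}*_L\X^{(k)}\|_F^2}{\|\A_{\tau_j}\|_F^2}.
\]
Because $P$ partitions $[n_1]$, the Frobenius norms split as $\sum_j\|\A_{\tau_j}\|_F^2=\|\A\|_F^2$ and $\sum_j\|\B_{\tau_j}-\A_{\tau_j}*_L\X^{(k)}\|_F^2=\|\B-\A*_L\X^{(k)}\|_F^2$; bounding $\|\A_{\tau_j}\|_F^2\le\|\A\|_F^2$ in the denominators and using $\B=\A*_L\hat{\X}$ therefore gives
\[
\mathbb{E}_{\tau_k}\!\left[D_{f,\Z^{(k+1)}}(\X^{(k+1)},\hat{\X})\right]\le D_{f,\Z^{(k)}}(\X^{(k)},\hat{\X})-\frac{t}{M\|\A\|_F^2}\Bigl(1-\frac{t\rho\varepsilon^2}{2\varepsilon^2-2\lambda}\Bigr)\|\A*_L(\X^{(k)}-\hat{\X})\|_F^2.
\]

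Then I would invoke Lemma~\ref{lemma:nu} ($f$ is admissible) to replace $\|\A*_L(\X^{(k)}-\hat{\X})\|_F^2$ by $\nu\,D_{f,\Z^{(k)}}(\X^{(k)},\hat{\X})$ --- this requires $D_{f,\Z^{(k)}}(\X^{(k)},\hat{\X})\le D_{f,\Z^{(0)}}(\X^{(0)},\hat{\X})$, which follows by induction from the recursion itself --- and take total expectation to obtain the contraction
\[
\mathbb{E}\!\left[D_{f,\Z^{(k+1)}}(\X^{(k+1)},\hat{\X})\right]\le\Bigl(1-\frac{\nu t}{M\|\A\|_F^2}\bigl(1-\frac{t\rho\varepsilon^2}{2\varepsilon^2-2\lambda}\bigr)\Bigr)\mathbb{E}\!\left[D_{f,\Z^{(k)}}(\X^{(k)},\hat{\X})\right].
\]
Iterating $k$ times from $\Z^{(0)},\X^{(0)}$ and finally using the strong-convexity bound \eqref{bregmanprop}, $\frac{\alpha}{2}\|\X^{(k)}-\hat{\X}\|_F^2\le D_{f,\Z^{(k)}}(\X^{(k)},\hat{\X})$ with $\frac{2}{\alpha}=\frac{2\varepsilon^2}{\varepsilon^2-\lambda}$, delivers the claimed bound; the hypothesis $0<\frac{\nu t}{M\|\A\|_F^2}(1-\frac{t\rho\varepsilon^2}{2\varepsilon^2-2\lambda})<1$ keeps the contraction factor in $(0,1)$, so convergence is linear. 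The only genuinely new ingredient beyond Theorem~\ref{thm:randconv} is the reduction to Proposition~\ref{Prop1} for a multi-slice block together with the partition identities; I expect the bookkeeping around \emph{uniform} (rather than norm-weighted) block sampling --- which is exactly what produces the extra factor $1/M$ in the rate via the crude bound $\|\A_{\tau_j}\|_F^2\le\|\A\|_F^2$ --- to be the main point to handle carefully, though it is not a serious obstacle.
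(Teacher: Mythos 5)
Your proposal follows essentially the same route as the paper's proof: apply the per-iteration Bregman decay (Proposition~\ref{Prop1}/\eqref{convthm}) to the selected block $\A(\tau_k)$, take the conditional expectation over the uniformly chosen block to pick up the factor $1/M$ and relate the blockwise residuals to $\|\A*_L(\X^{(k)}-\hat{\X})\|_F^2/\|\A\|_F^2$, invoke Lemma~\ref{lemma:nu}, iterate the resulting contraction, and convert to $\mathbb{E}\|\X^{(k)}-\hat{\X}\|_F^2$ via \eqref{bregmanprop} with $\alpha=1-\lambda/\varepsilon^2$. The only (minor) divergence is at the averaging step, where you obtain the needed bound honestly via $\|\A_{\tau_j}\|_F^2\le\|\A\|_F^2$ together with the partition identities, whereas the paper asserts an equality between the averaged blockwise ratio and $\frac{1}{M}\|\A*_L(\X^{(k)}-\hat{\X})\|_F^2/\|\A\|_F^2$; your inequality-based bookkeeping is sound and yields the same rate.
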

\begin{proof}

 The key part of proving this theorem lies in the following bound where $M$ is the number of blocks.
    From \eqref{convthm} we have
    \begin{align*}
    &D_{f,\Z^{(k+1)}}(\X^{(k+1)},\X)\\
    &\leq D_{f,\Z^{(k)}}(\X^{(k)},\X) - t\left(1-\frac{t \rho \varepsilon^2}{2\varepsilon^2 -2\lambda}\right)\frac{\| A(\tau_k)*_L(\X^{(k)}-\X)\|_F^2}{\|\A(\tau_k)\|_F^2}
\end{align*}
and we want to analyze the expectation of $\frac{\| \A(\tau_k)*_L(\X^{(k)}-\X)\|_F^2}{\|\A(\tau_k)\|_F^2}$, where $\A(\tau_k) = \A(\tau_k,:,:,...,:)$. Then let $\mathbb{E}_c$ be the expectation conditioned on $\tau_0,...\tau_{k-1}$.

By the definition of expectation and Lemma~\ref{lemma:nu}, we have
\begin{align*}
   \mathbb{E}_c\left[\frac{\| \A(\tau_k)*_L(\X^{(k)}-\X)\|_F^2}{\|\A(\tau_k)\|_F^2}\right] &= \frac{1}{M}\sum_{j\in \tau_k}\frac{\| \A(j)*_L(\X^{(k)}-\X)\|_F^2}{\|\A(j)\|_F^2} \\
   & = \frac{1}{M}\frac{\| \A*_L(\X^{(k)}-\X)\|_F^2}{\|\A\|_F^2}\\
   & \geq  \frac{1}{M} \frac{\nu}{\|\A\|_F^2}D_{f,\Z^{(k)}}(\X^{(k)},\hat{\X}).
\end{align*} The last inequality follows from \eqref{nulemma}. Then taking the expectation leads to
\begin{align*}
    &\mathbb{E}\left[D_{f,\Z^{(k+1)}}(\X^{(k+1)},\hat{\X})\right]\\
    & \leq
    \mathbb{E}\left[D_{f,\Z^{(k)}}(\X^{(k)},\hat{\X})-t\left(1-\frac{t \rho \varepsilon^2}{2\varepsilon^2 -2\lambda}\right)\frac{\| \A(\tau_k)*_L(\X^{(k)}-\X)\|_F^2}{\|\A(\tau_k)\|_F^2}\right]\\
    & = \mathbb{E}_{\tau_0,...,\tau_{k-1}}\mathbb{E}_c\left[D_{f,\Z^{(k)}}(\X^{(k)},\hat{\X})-t\left(1-\frac{t \rho \varepsilon^2}{2\varepsilon^2 -2\lambda}\right)\frac{\| \A(\tau_k)*_L(\X^{(k)}-\X)\|_F^2}{\|\A(\tau_k)\|_F^2}\right]\\
    &\leq \mathbb{E}_{\tau_0,...,\tau_{k-1}}\left[D_{f,\Z^{(k)}}(\X^{(k)},\hat{\X})-t\left(1-\frac{t \rho \varepsilon^2}{2\varepsilon^2 -2\lambda}\right)\frac{1}{M} \frac{\nu}{\|\A\|_F^2}D_{f,\Z^{(k)}}(\X^{(k)},\hat{\X})\right]\\
    &\leq \left(1-\frac{\nu t}{M\|\A\|_F^2}\left(1-\frac{t \rho \varepsilon^2}{2\varepsilon^2 -2\lambda}\right)\right)\mathbb{E}\left[D_{f,\Z^{(k)}}(\X^{(k)},\hat{\X})\right].
\end{align*}
After applying the above inequality repeatedly, we can bound the expected Bregman distance after $k$ iterations in terms of the initial Bregman distance.
\[
\mathbb{E}\left[D_{f,\Z^{(k)}}(\X^{(k)},\hat{\X})\right]\leq \left(1-\frac{\nu t}{M\|\A\|_F^2}\left(1-\frac{t \rho \varepsilon^2}{2\varepsilon^2 -2\lambda}\right)\right)^k\left[D_{f,\Z^{(0)}}(\X^{(0)},\hat{\X})\right].
\]
Then by \eqref{bregmanprop}, we have
\begin{align*}
    \mathbb{E}\|\X^{(k)}-\hat{\X}\|_F^2&\leq \frac{2\varepsilon^2}{\varepsilon^2 - \lambda}D_{f,\Z^{(0)}}\mathbb{E}\left[D_{f,\Z^{(k)}}(\X^{(k)},\hat{\X})\right]\\
    &\leq \left[\frac{2\varepsilon^2}{\varepsilon^2 - \lambda}D_{f,\Z^{(0)}}(\X^{(0)},\hat{\X})\right]\left(1-\frac{\nu t}{M\|\A\|_F^2}\left(1-\frac{t \rho \varepsilon^2}{2\varepsilon^2 -2\lambda}\right)\right)^k.
\end{align*}
Note that this result relies on $M$, the number of blocks.

\end{proof}

\subsection{Block Kaczmarz with overlapping indices} Another variant of the regularized Kaczmarz algorithm is the block Kaczmarz with overlapping indices (OL). Specifically, at each iteration of the algorithm, a block of indices is selected from $[n_1]$ with replacement, i.e., the blocks may overlap. This method is utilized in the numerical experiments of \cite{chen2021regularized}, where the size of each block is uniform. Under certain conditions and up to some reordering, the two block algorithm cases can be equivalent. For example, this is true when the block size is uniform, $n_1$ is divisible by the block size, and the algorithm is performed cyclically.

\section{Numerical Experiments}\label{sec:num}

In this section, we demonstrate the effectiveness of our proposed algorithms on synthetic and real-world data for sparse and low-rank high-order tensor recovery applications. Throughout the numerical experiments, we utilize the following quantitative metrics to evaluate the recovery performance of various tensor recovery algorithms:
\begin{enumerate}
\item The relative error (RE) at each iteration is defined as
 \begin{align}
    RE(\X^{(k)},\hat{\X})= \frac{\|\hat{\X}-\X^{(k)}\|_F}{\|\hat{\X}\|_F},\label{relative error}
 \end{align}
 where $\X^{(k)}$ is the estimation of the ground truth $\hat{\X}$ at the $k$-th iteration.

 \item The Peak Signal-to-Noise Ratio (PSNR)\cite{qin2022low}  is calculated as
\[\text{PSNR} = 10 \log_{10}(n_1\times n_2\times \cdots \times n_m\|\X\|_\infty^2/\|\hat{\X}-\X\|_F^2),\]
where $\X$ is the original tensor of size $n_1\times n_2\times \ldots\times n_m$ and $\hat{\X}$ is the approximated tensor. This definition is a generalized version of the standard PSNR in the image processing community.

\item The Structural Similarity Index Measure (SSIM) \cite{wang2004image} calculates feature similarity by combining similarity measures for luminance, contrast, and structure.

\item The Feature Similarity Index Measure (FSIM) \cite{zhang2011fsim} utilizes phase congruency and gradient magnitude to characterize local image quality.
\end{enumerate}

For the synthetic experiments, we adopt the relative error for performance assessment. But for the experiments on real-world data, we adopt all four qualitative metrics. All the experiments were implemented in MATLAB 2023b on a desktop computer with Intel CPU i7-1065G7 CPU RAM 12GB with Windows 11. The demo codes of our proposed algorithm are available\footnote{https://github.com/khenneberger}.

\subsection{Synthetic experiments}
In the following numerical experiments we apply Algorithm \ref{alg:LSP}, LSPK-S,  and Algorithm \ref{alg:LSPsing}, LSPK-L, to a synthetic data set where the tensors $\A\in\R^{10\times 2\times 10\times 10}$ and ground truth $\X\in\R^{2\times 10\times 10\times 10}$ are randomly generated. For the sparse recovery algorithm LSPK-S, $20\%$ of the entries of $\X$ are randomly set to zero.

We conduct various comparisons using this synthetic dataset. First, we evaluate the convergence of our two algorithms, LSPK-S and LSPK-L, with block OL and block NOL. Next, we analyze the convergence of our algorithms under three distinct linear transforms. Finally, we assess the convergence of our algorithms with different block sizes for the block OL algorithms and varying numbers of blocks for the block NOL algorithms.

When applying the block OL algorithms, a different index block $i(k)$ is chosen at each iteration $k$. The block size, $\beta$, is constant at each iteration and our choice of indexing can be cyclic or randomized. When applying the block NOL algorithms, the rows indices of $\A$ are partitioned into $d$ blocks and at iteration $k$, block $\tau_k$ is chosen either cyclically or randomly. Since the block size and the number of blocks effect convergence, one may tune these parameters to find the optimal choice. Throughout this section we set the parameters as $\beta= 7, \lambda = 0.001$ and $t=1$, $\varepsilon = 0.1$, $d = 3$ and $L=$ \verb 'fft' unless specified otherwise. Fig.~\ref{fig:batchblock} plots the relative error curves averaged over 50 trials. In both the LSPK-S and LSPK-L algorithms, the random block NOL variant demonstrates rapid convergence, while the random block OL variant shows the slowest convergence.

\begin{figure}
    \centering
     \begin{tabular}{c c}
     \includegraphics[width = .47\textwidth]{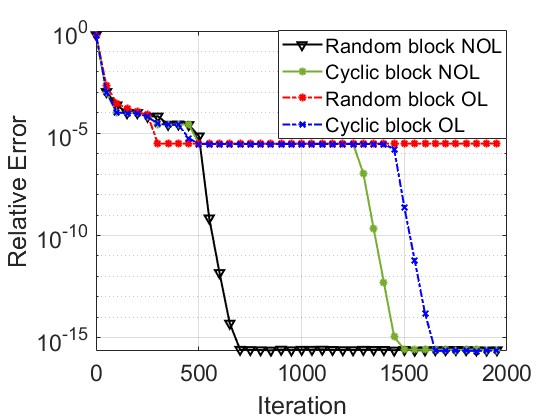}
     & \includegraphics[width = .47\textwidth]{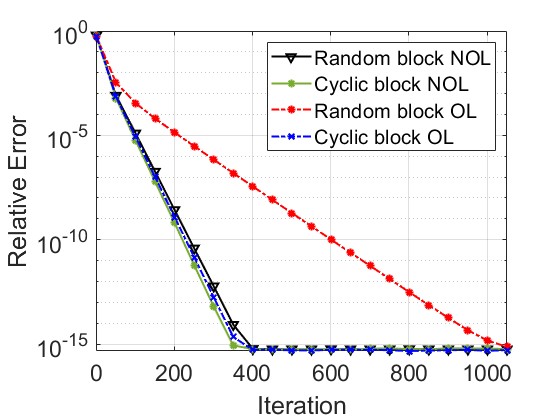} \\
        LSPK-S (Alg.~\ref{alg:LSP}) &  LSPK-L (Alg.~\ref{alg:LSPsing})
     \end{tabular}
     \caption{Convergence for cyclic and random blocking with overlapping and non-overlapping indices}
     \label{fig:batchblock}
 \end{figure}

Next, we compare the convergence of  Algorithms \ref{alg:LSP} and \ref{alg:LSPsing}  with different linear transforms: the Fast Fourier Transform (FFT), the Discrete Cosine Transform (DCT), and the Discrete Wavelet Transform (DWT) with the Daubechies 5 wavelet `\verb"db5"'. For this experiment, we use random blocks of size $\beta = 7$. Figure \ref{fig:lintrans} plots the relative error averaged over 50 trials produced by using the linear transforms FFT, DCT, and DWT. For this synthetic data, the regularized Kaczmarz algorithm with the FFT-based t-product performs better than the versions using the DCT or DWT.

 \begin{figure}
     \centering
     \begin{tabular}{c c}
        \includegraphics[width = .47\textwidth]{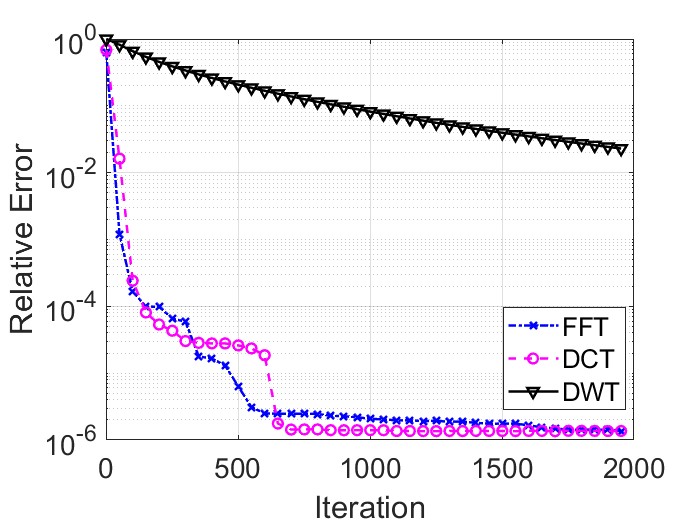}
        &  \includegraphics[width = .47\textwidth]{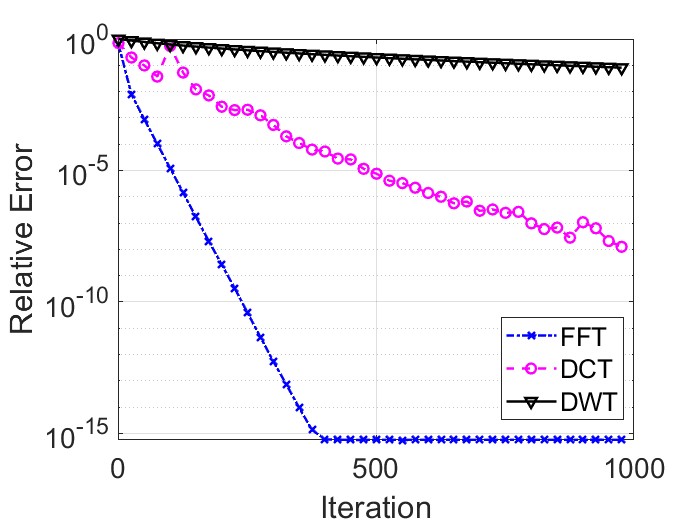} \\
       LSPK-S (Alg.~ \ref{alg:LSP}) &  LSPK-L (Alg.~ \ref{alg:LSPsing})
     \end{tabular}
     \caption{Convergence under different linear transforms}
     \label{fig:lintrans}
 \end{figure}
We also compare the convergence of our algorithms with different block sizes. Figure \ref{fig:batchblocksize} plots the relative error curves for our two algorithms where blocking with and without overlapping are both performed randomly. Figure \ref{fig:timecomp} presents the time in seconds per iteration for varying block sizes (the overlapping case) and number of blocks (the non-overlapping case). The fewer number of blocks, the more rows are used in each iteration of the algorithm and thus the more time used. Similarly, the larger the block size, the more rows are used in each iteration of the algorithm and thus more time is needed to run 2000 iterations. From these observations, we see the trade-off of time and convergence when selecting block sizes and the number of blocks. For example, when the block size is one, the algorithm is faster for each iteration but slower to converge when compared to other block sizes.
 \begin{figure}
     \centering
     \begin{tabular}{c c}
        \includegraphics[width = .47\textwidth]{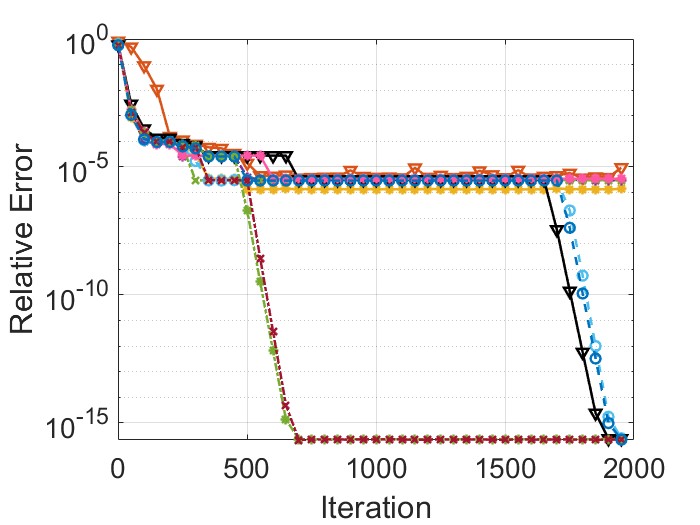}
        &  \includegraphics[width = .47\textwidth]{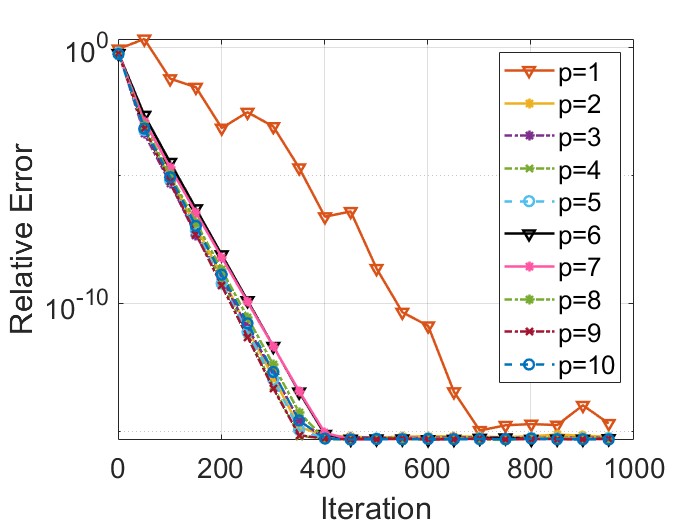} \\
       LSPK-S (Alg.~ \ref{alg:LSP}) OL &  LSPK-L (Alg.~ \ref{alg:LSPsing}) OL\\
       \includegraphics[width = .47\textwidth]{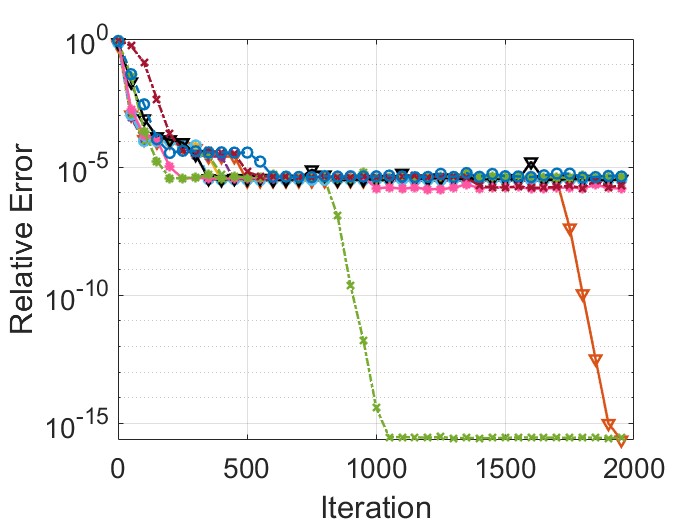}
       &  \includegraphics[width = .47\textwidth]{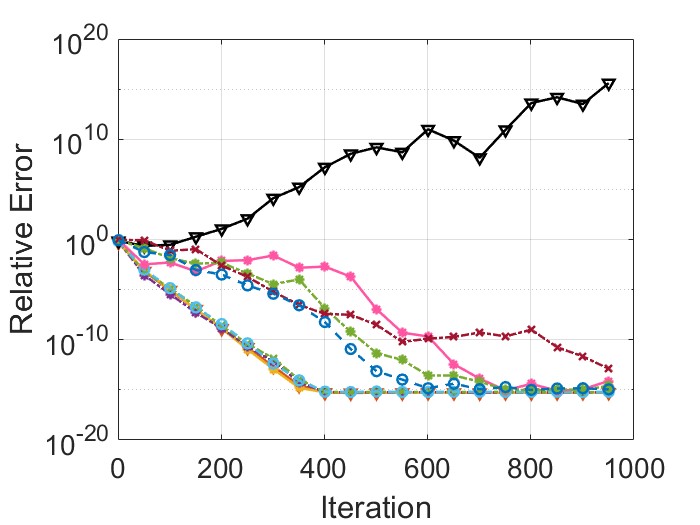} \\
       LSPK-S (Alg.~ \ref{alg:LSP}) NOL &  LSPK-L (Alg.~ \ref{alg:LSPsing}) NOL
     \end{tabular}
     \caption{Convergence with different number of blocks and block sizes. ``NOL'' refers to blocks with non-overlapping indices where ``p'' indicates the number of blocks. ``OL'' refers to blocks with overlapping indices where ``p'' indicates the block sizes. }
     \label{fig:batchblocksize}
 \end{figure}
 \begin{figure}
     \centering
     \includegraphics[width = .47\textwidth]{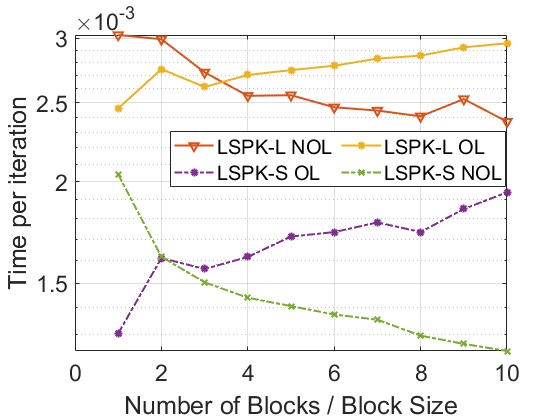}
     \caption{Comparison of running time in seconds per iteration for varying number of blocks and block sizes. ``NOL'' refers to blocks with non-overlapping indices where the horizontal axis indicates the number of blocks. ``OL'' refers to blocks with overlapping indices where the horizontal axis indicates the block sizes.}
     \label{fig:timecomp}
 \end{figure}

\subsection{Tensor Destriping}

In this experiment, we consider a low-rank image destriping problem. Striping refers to the presence of straight horizontal or vertical stripes caused by line-scanning, and image destriping aims to remove stripes or streaks from the visual data. We consider Extended YaleFace Dataset B \cite{georghiades2001few}, a fourth order dataset of 38 subjects photographed under different illumination conditions. We consider a subset of the original data such that our tensor is of size $48\times 42\times 64\times 38$. This includes $38$ subjects under $64$ different illuminations where each image is $48\times 42$ pixels.

For this application, we compare our approach with two other related recent high-order tensor recovery methods: HTNN-FFT \cite{qin2022low} and t-CTV \cite{wang2023guaranteed}. Based on the assumption that the desired tensor $\hat{\X}$ possesses a low t-SVD rank, HTNN-FFT solves the following model
\begin{align}
\min_{\X}\|\X\|_{\star,L}\quad \text{s.t.} \quad P_\Omega(\X) = P_\Omega(\hat{\X}),
\end{align}
where $P_\Omega(\cdot)$ represents the projection onto the observed index set $\Omega$, and $\|\cdot\|_{\star,L}$ stands for the high-order tensor nuclear norm. More recently, the t-CTV algorithm \cite{wang2023guaranteed} assumes the low-rankness of gradient tensors of the underlying tensor. Consequently, it solves the following model
\begin{align}
\min_{\X}\|\X\|_{\text{t-CTV}}\quad \text{s.t.} \quad P_\Omega(\X) = P_\Omega(\hat{\X}),
\end{align}
where $\|\cdot\|_{\text{t-CTV}}$ is the tensor correlated total variation norm. Note that both HTNN-FFT and t-CTV algorithms are derived by applying the alternating direction method of multipliers (ADMM).

To start with, the tensor $\A$ is chosen to be facewise diagonal with all diagonal entries equal to one except in the rows which will establish the striping effect. We choose $0.01$ for the diagonal entries where we wish to create a stripe. After parameter tuning, we choose the optimal parameters $\lambda = 0.1, t = 1,$ and $ \beta = 1$, and $\varepsilon = 1$. Figure \ref{fig:destripe} shows results on the three selected images demonstrating both the quantitative and qualitative success of Algorithm \ref{alg:LSPsing} as compared with two state-of-the-art methods for tensor completion: HTNN-FFT \cite{qin2022low} and t-CTV \cite{wang2023guaranteed}. Figure \ref{fig:destripeErr} plots the
relative error of our proposed method and the two comparison methods over 500 iterations. The proposed algorithm demonstrates superior performance compared to both the comparison methods. Additionally, HTNN-FFT exhibits slightly better performance than t-CTV in this application.

\begin{figure}[ht]
    \centering
    \setlength{\tabcolsep}{2pt}
    \begin{tabular}{ccccc}
        Original & Observed &  t-CTV& HTNN-FFT & Proposed \\
        {\includegraphics[width = 0.19\textwidth]{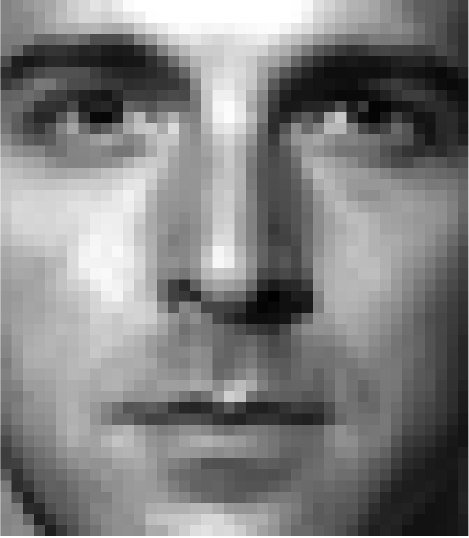}}& {\includegraphics[width = 0.19\textwidth]{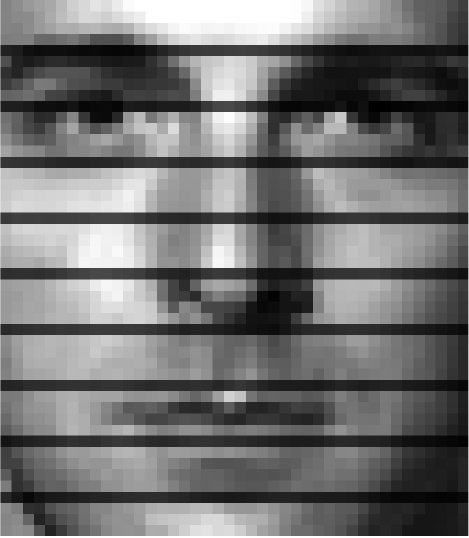}}&{\includegraphics[width = 0.19\textwidth]{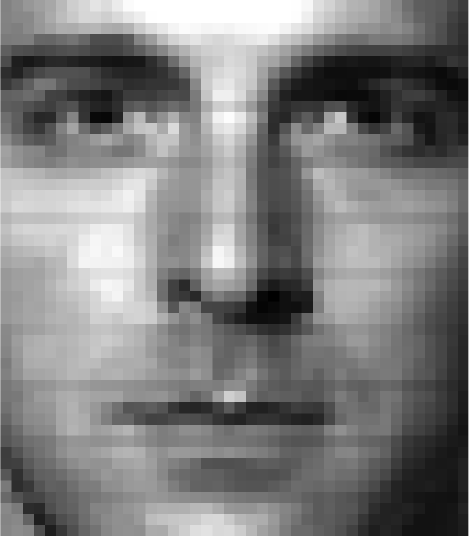}}& {\includegraphics[width = 0.19\textwidth]{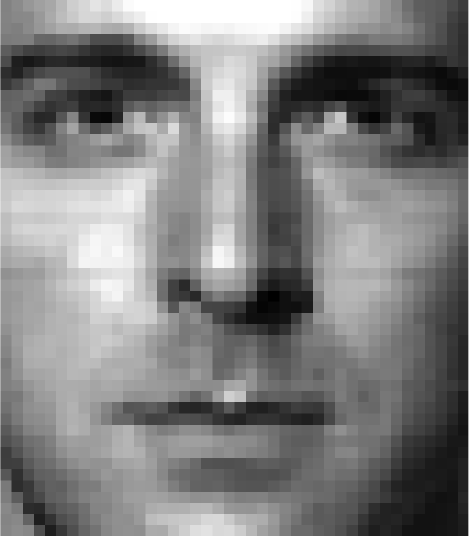}}& {\includegraphics[width = 0.19\textwidth]{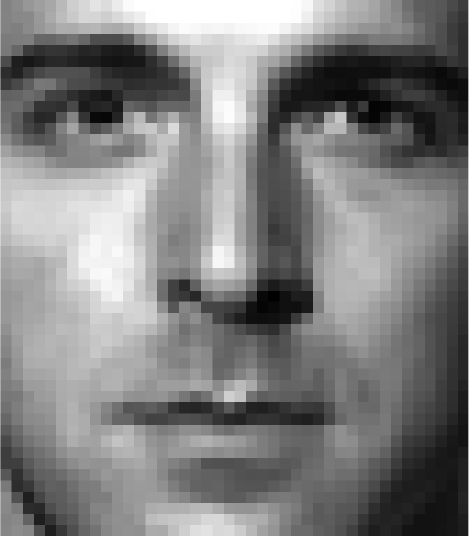}} \\
      {\includegraphics[width = 0.19\textwidth]{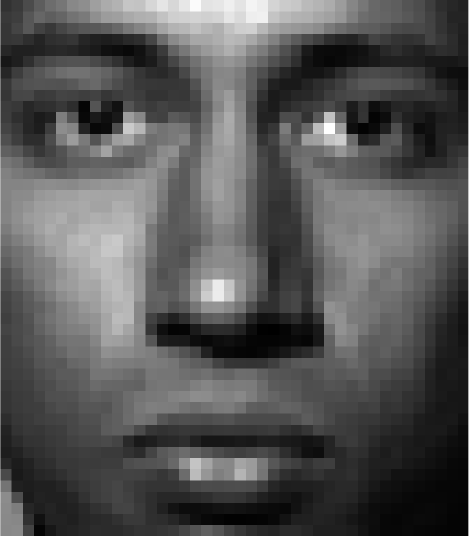}}& {\includegraphics[width = 0.19\textwidth]{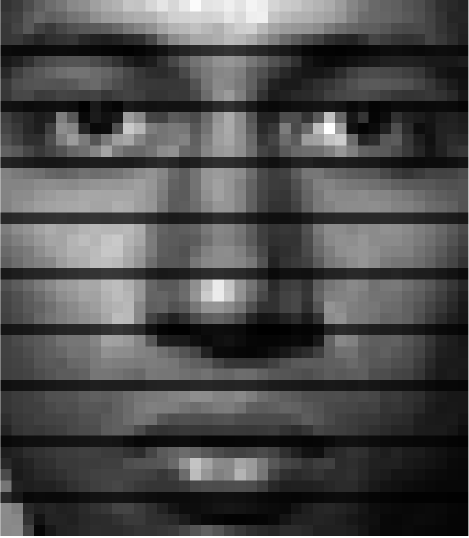}}&  {\includegraphics[width = 0.19\textwidth]{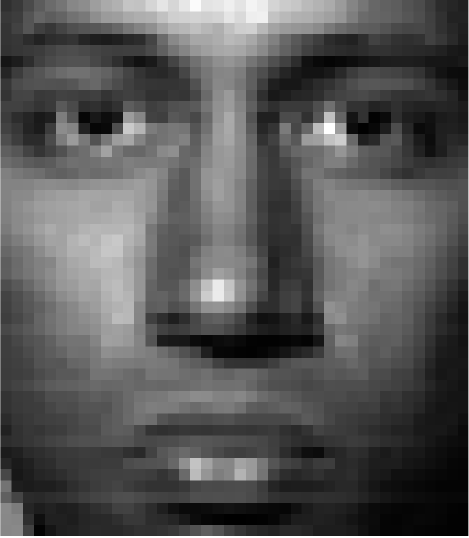}}& {\includegraphics[width = 0.19\textwidth]{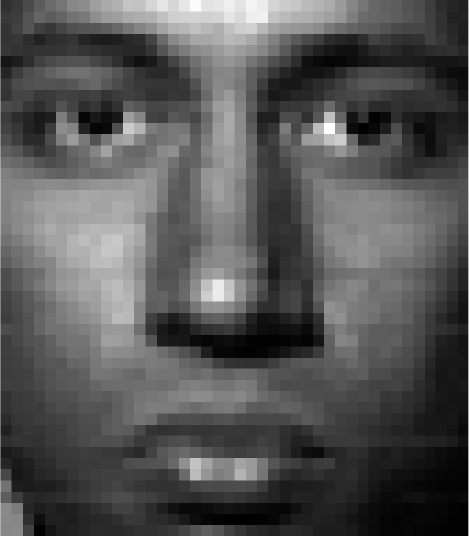}}&{\includegraphics[width = 0.19\textwidth]{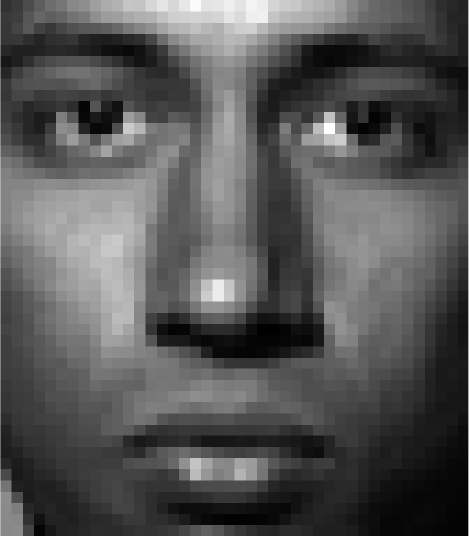}}\\
     {\includegraphics[width = 0.19\textwidth]{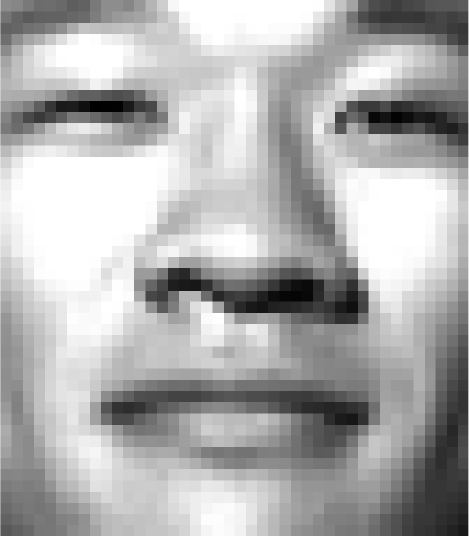}}& {\includegraphics[width = 0.19\textwidth]{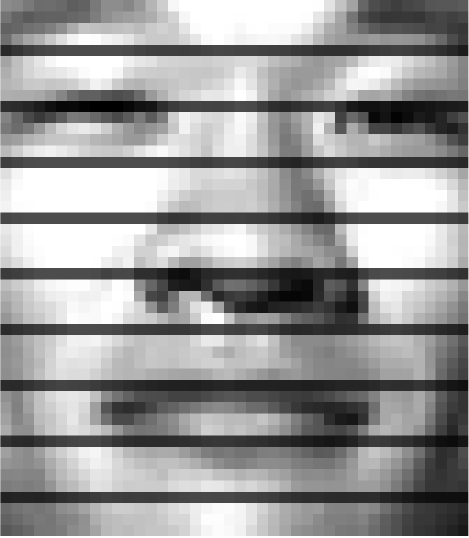}}& {\includegraphics[width = 0.19\textwidth]{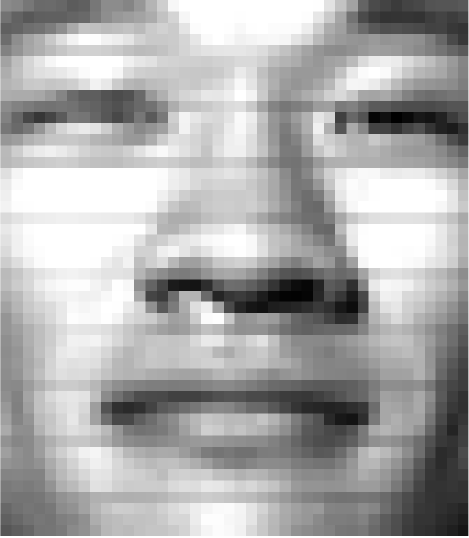}}& {\includegraphics[width = 0.19\textwidth]{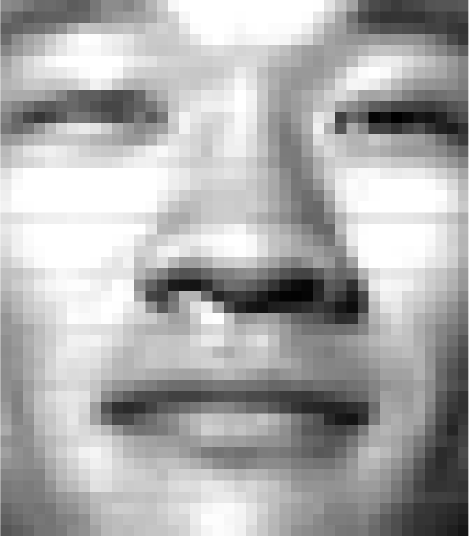}}&{\includegraphics[width = 0.19\textwidth]{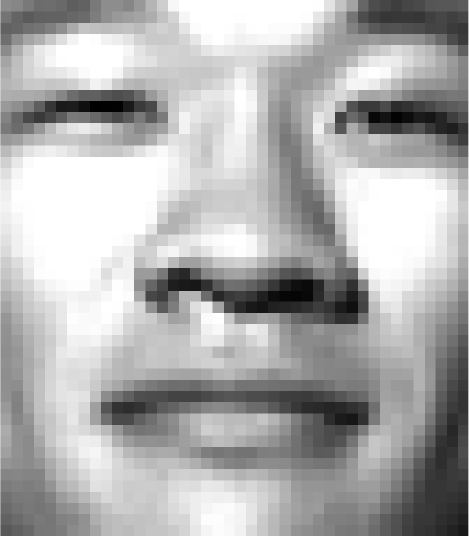}}\\
     & & RE $=0.07591$ & RE $= 0.0542$ & RE $= 1.06\times 10^{-15}$
        \end{tabular}
        \caption{Visual results where every 5th row is a stripe}
        \label{fig:destripe}
\end{figure}
\begin{figure}
    \centering    \includegraphics[width = 0.5\textwidth]{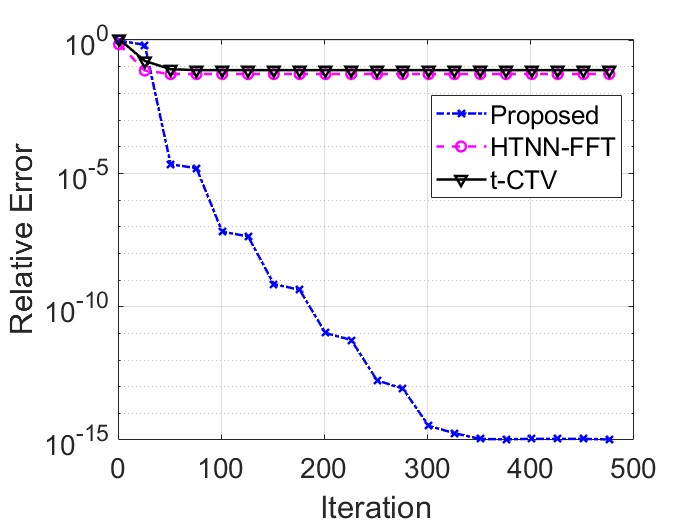}
    \caption{Relative error results where every 5th row is a stripe}   \label{fig:destripeErr}
\end{figure}

We further consider a random sampling of rows with varying sampling rates (SR). Table \ref{tab:comp4} compares our method with HTNN-FFT and t-CTV for several different sampling rates over the rows of the dataset. These results are the average of 50 trials where sampling is performed randomly over the rows of our data for each trial and the average time per second (/s) for each iteration is recorded in Table \ref{tab:comp4}.  In this application, the proposed algorithm outperforms both comparison methods in terms of quantitative metrics. As observed in Table \ref{tab:comp4} and Figure \ref{fig:destripeErr}, the relative error of the proposed algorithm reaches machine epsilon within a small number of iterations, demonstrating its rapid convergence. Despite t-CTV running much slower than HTNN-FFT, it exhibits significantly better performance across all quantitative metrics.
\begin{table}[ht]
    \centering
    \begin{tabular}{|c|c|c|c|c|c|c|}
\hline
     \textbf{SR}&\textbf{Method} & \textbf{PSNR}  & \textbf{SSIM}  & \textbf{FSIM} & \textbf{RE} & \textbf{time/s} \\
     \hline\hline
     \multirow{3}{*}{80\%}  & Proposed method & $228.276$   & $1.000$  & $1.000$ & $1.05409\times 10^{-11}$ &  $1.1905$\\
     \cline{2-7}& HTNN-FFT  & $15.5723$ &$0.9986$  &  $0.9713$   &  $0.4553$& $0.8477$\\

     \cline{2-7}& t-CTV     &     $28.7759$  &  $0.9999$ &   $0.9965$  & $0.10084$  &$4.2716$\\
     \hline\hline
     \multirow{3}{*}{70\%} & Proposed method & $228.276$   & $1.000$  & $1.000$ & $1.05409\times 10^{-11}$ &  $1.0879$\\
     \cline{2-7}& HTNN-FFT & $14.0413$   & $0.9975$  &  $0.9627$ & $0.5430$ & $ 0.7962$\\

     \cline{2-7}& t-CTV &  $25.3434$    & $0.9998$  &  $0.9925$  & $0.1525$ &  $4.2348$\\
     \hline\hline
     \multirow{3}{*}{60\%} &Proposed method & $228.276$   & $1.000$  & $1.000$ & $1.05409\times 10^{-11}$ &  $1.0773$\\
     \cline{2-7}& HTNN-FFT & $12.7158$    &  $0.9960$ &   $0.9538$  & $0.6324$&    $0.7436$\\

     \cline{2-7}& t-CTV &  $21.8713$     & $0.9994$   & $0.9856$& $0.2319$ & $4.1702$\\
     \hline\hline
     \multirow{3}{*}{50\%} & Proposed method & $228.276$   & $1.000$  & $1.000$ & $1.05409\times 10^{-11}$ &  $1.0582$\\
     \cline{2-7} & HTNN-FFT & $11.7498$   &  $0.9947$ &   $0.9467$& $0.7068$ & $0.6974$\\
     \cline{2-7}& t-CTV &  $19.5387$   &  $0.9990$  &  $0.9793$&$0.3060$ & $4.0730$\\
   \hline
\end{tabular}
    \caption{Performance Comparison for $4$th order tensor recovery with random row sampling}
    \label{tab:comp4}
\end{table}

\section{Conclusion}\label{sec:Con}

In this work, we propose novel log-sum penalty regularizied models for recovering tensors with either sparse or low-rank structures, utilizing a generalized t-product for high order tensors based on some invertible transform. To solve them, we develop regularized Kaczmarz algorithms which alternates the Kaczmarz step and proximity operator. Convergence guarantees are discussed for the proposed algorithms. In addition, we extend the proposed algorithms with two block variants for handling large data sets to further boost the computational efficiency. A variety of numerical experiments on synthetic and real data sets demonstrate the outstanding performance of our algorithms over the other state-of-the-art methods. In our future work, we will adapt this tensor regularization framework to other tensor decomposition settings and develop their accelerated versions with gradient variance stabilization techniques for various big-data applications.

\section*{Acknowledgements}
The research of Qin is supported partially by the NSF grant DMS-1941197. The research of Henneberger is supported partially by the NSF grant DMS-1941197 and DMS-2110731. In addition, the authors would like to thank Dr. Ding Lu for the helpful discussion.

\bibliographystyle{unsrt}
\bibliography{references}
\end{document}